%
%
%

\documentclass{svproc}
%
%

\usepackage{url}



\usepackage{comment}
\usepackage{siunitx}
\usepackage{relsize}
\usepackage{ifthen}
\usepackage[colorinlistoftodos]{todonotes}

\usepackage[caption=false]{subfig}





\usepackage[vlined,ruled,linesnumbered]{algorithm2e}
\usepackage{graphics} 
\usepackage{rotating}
\usepackage{color}
\usepackage{enumerate}
\usepackage[T1]{fontenc}
\usepackage{psfrag}
\usepackage{epsfig} 
\usepackage{booktabs}
\usepackage{graphicx,url}
\usepackage{multirow}
\usepackage{array}
\usepackage{latexsym}
\usepackage{amsfonts}
\usepackage{amsmath}
\usepackage{amssymb}
\usepackage{mathtools}
\usepackage{xstring}
\usepackage[noend]{algorithmic}
\usepackage{multirow}
\usepackage{xcolor}
\usepackage{prettyref}
\usepackage{flexisym}
\usepackage{bigdelim}
\usepackage{breqn} 
\usepackage{listings}

\usepackage{enumitem}
\usepackage{xspace}
\usepackage{bm}
\graphicspath{{./figures/}}
\usepackage{tikz}
\usetikzlibrary{matrix,calc}


%

\usepackage{mdwlist}

\makecompactlist{itemize}{stditemize}





\newtheorem{assumption}[theorem]{Assumption}

\newcommand{\cf}{\emph{cf.}\xspace}

\newcommand{\bdmath}{\begin{dmath}}
\newcommand{\edmath}{\end{dmath}}
\newcommand{\beq}{\begin{equation}}
\newcommand{\eeq}{\end{equation}}
\newcommand{\bdm}{\begin{displaymath}}
\newcommand{\edm}{\end{displaymath}}
\newcommand{\bea}{\begin{eqnarray}}
\newcommand{\eea}{\end{eqnarray}}
\newcommand{\beal}{\beq \begin{array}{ll}}
\newcommand{\eeal}{\end{array} \eeq}
\newcommand{\beas}{\begin{eqnarray*}}
\newcommand{\eeas}{\end{eqnarray*}}
\newcommand{\ba}{\begin{array}}
\newcommand{\ea}{\end{array}}
\newcommand{\bit}{\begin{itemize}}
\newcommand{\eit}{\end{itemize}}
\newcommand{\ben}{\begin{enumerate}}
\newcommand{\een}{\end{enumerate}}


\newcommand{\calA}{{\cal A}}

\newcommand{\calC}{{\cal C}}



\newcommand{\eg}{\emph{e.g.,}\xspace}
\newcommand{\ie}{\emph{i.e.,}\xspace}





\newcommand{\hide}[1]{}
\newcommand{\wrt}{w.r.t.\xspace}

\newcommand{\grayout}[1]{{\color{gray} #1}}

\newcommand{\hiddenText}{{\color{gray} hidden text.}}
\newcommand{\hideWithText}[1]{\hiddenText}



\newcommand{\subject}{\text{ subject to }}
\DeclareMathOperator*{\argmax}{arg\,max}


\newcommand{\norm}[1]{\left\| #1 \right\|}

\newcommand{\tran}{^{\mathsf{T}}}

\newcommand{\trace}[1]{\mathrm{tr}\left(#1\right)}

\newcommand{\rank}[1]{\mathrm{rank}\left(#1\right)}

\newcommand{\inv}{^{-1}}

\newcommand{\eye}{{\mathbf I}}

\newcommand{\Real}[1]{ { {\mathbb R}^{#1} } }










%
%

\newcommand{\blue}[1]{{\color{blue}#1}}

\newcommand{\linkToPdf}[1]{\href{#1}{\blue{(pdf)}}}
\newcommand{\linkToPpt}[1]{\href{#1}{\blue{(ppt)}}}
\newcommand{\linkToCode}[1]{\href{#1}{\blue{(code)}}}
\newcommand{\linkToWeb}[1]{\href{#1}{\blue{(web)}}}
\newcommand{\linkToVideo}[1]{\href{#1}{\blue{(video)}}}
\newcommand{\linkToMedia}[1]{\href{#1}{\blue{(media)}}}
\newcommand{\award}[1]{\xspace} 




\usepackage[pagebackref=true,breaklinks=true,letterpaper=true,colorlinks,bookmarks=true]{hyperref}

\renewcommand{\norm}[1]{\left\lVert #1 \right\rVert}
\newcommand{\inprod}[2]{\left\langle #1, #2 \right\rangle}

\newcommand{\mymid}{\ \middle\vert\ }
\newcommand{\cbrace}[1]{\left\{#1\right\}}

\newcommand{\bmat}{\left[ \begin{array}}
\newcommand{\emat}{\end{array}\right]}

\newcommand{\parentheses}[1]{\left(#1\right)}

\newcommand{\tldV}{\widetilde{V}}
\newcommand{\hatV}{\widehat{V}}
\newcommand{\tldTheta}{\widetilde{\Theta}}
\newcommand{\tldl}{\widetilde{l}}

\newcommand{\bracket}[1]{\left[#1\right]}
\newcommand{\abs}[1]{\left|#1\right|}
\newcommand{\dx}{\dot{x}}
\newcommand{\bbU}{\mathbb{U}}

\newcommand{\bbN}{\mathbb{N}}

\newcommand{\poly}[1]{\mathbb{R}[#1]}

\newcommand{\ceil}[1]{\left\lceil #1 \right\rceil}

\newcommand{\bbK}{\mathbb{K}}

\newcommand{\HYedit}[1]{#1}

\newcommand{\snew}{\mathfrak{s}}
\newcommand{\cnew}{\mathfrak{c}}

\newcommand{\thetalb}{\underline{\theta}}
\newcommand{\thetaub}{\bar{\theta}}
\newcommand{\volume}[1]{\text{vol}(#1)}
\newcommand{\tldbeta}{\tilde{\beta}}

\newcommand{\kscedit}[1]{#1}

\newcommand{\qedsymbol}{$\blacksquare$}

\begin{document}
\mainmatter              
\title{Verification and Synthesis of Robust Control Barrier Functions: Multilevel Polynomial Optimization and Semidefinite Relaxation}

\titlerunning{Verification and Synthesis of Robust CBF}  
%
\author{Shucheng Kang\inst{1} \and Yuxiao Chen\inst{2} \and Heng Yang\inst{1,2} \and Marco Pavone\inst{2}}
\authorrunning{ } 
%
\tocauthor{Ivar Ekeland, Roger Temam, Jeffrey Dean, David Grove,
Craig Chambers, Kim B. Bruce, and Elisa Bertino}
\institute{\kscedit{School of Engineering and Applied Sciences, Harvard University, USA
\and
NVIDIA Research, USA}}

\maketitle              

\vspace{-4mm}
\begin{abstract}
    We study \HYedit{the problem of} verification and synthesis of \emph{robust} control barrier functions (CBF) for control-affine polynomial systems with \HYedit{bounded} additive uncertainty and \HYedit{convex polynomial constraints on the control}. We first formulate robust CBF verification and synthesis as \emph{multilevel} polynomial optimization problems (POP), where verification  optimizes --in three levels-- the uncertainty, control, and state, while synthesis additionally optimizes the parameter of a chosen parametric CBF candidate. We then show that, by invoking the KKT conditions of the inner optimizations over uncertainty and control, the verification problem can be simplified as a \emph{single-level} POP and the synthesis problem reduces to a \emph{min-max} POP. This reduction leads to multilevel semidefinite relaxations. For the verification problem, we apply Lasserre's hierarchy of moment relaxations. For the synthesis problem, \HYedit{we draw connections to existing relaxation techniques for \emph{robust min-max} POP}, which first use sum-of-squares programming to find \emph{increasingly tight} polynomial lower bounds to the unknown \emph{value function} of the verification POP, and then call Lasserre's hierarchy again to maximize the lower bounds. Both semidefinite relaxations guarantee \emph{asymptotic global convergence} to optimality. We provide an in-depth study of our framework on the controlled Van der Pol Oscillator, both with and without additive uncertainty.

\keywords{control barrier function, polynomial optimization, semidefinite relaxation, robust min-max optimization}

\end{abstract}


\section{Introduction}
Safety is critical in high-integrity autonomous systems. In the control community, algorithms based on \emph{energy functions} are widely employed to constrain the system state in a safe set. Among them, \emph{control barrier functions} (CBF) \cite{ames2014cdc-cbforigin} have attracted increasing research interests. A CBF encodes the safe set as its zero superlevel set that is \emph{control invariant}, \ie if the system starts within the safe set, a control sequence exists to keep the system in the safe set. Because dynamics \emph{uncertainty} is ubiquitous in real-world systems, the notion of a \emph{robust} CBF emerges to ensure the system stays safe despite the existence of uncertainties.

Current literature around (robust) CBF mainly focus on the CBF \emph{deployment} problem: given a CBF, synthesize a safe controller with quadratic programming \cite{taylor2020acc-robustqp} or second-order cone programming \cite{buch21csl-robust}. However, two challenges remain largely unsolved: how to verify and synthesize robust CBFs?
\begin{enumerate}[label=(\roman*)]
    \item Verification. Given a robust CBF candidate, for \emph{all states} belonging to its superlevel set and \emph{all possible dynamics uncertainty}, verify whether there always exists a control input to maintain the system inside the superlevel set.
    \item Synthesis. Find a valid robust CBF (and verify its correctness) from a given function space.
\end{enumerate}

For systems with {polynomial} dynamics and simple \HYedit{polyhedral bounds on the control}, current works convert the verification problem into a convex sum-of-squares (SOS) program~\cite{clark22arxiv-cbf,dai2022arxiv-clfcbfsynveri} and the synthesis problem into a nonconvex bilinear SOS program~\cite{wang2018acc-permissive,zhao22arxiv-cbfsos}. Two drawbacks exist in these approaches. First, the bilinear SOS program is solved via alternation which does not provide global convergence guarantees. Second, they do not handle dynamics uncertainty and general (convex) control constraints. 

\textbf{Contributions.}
We focus on robust CBF verification and synthesis for control-affine polynomial systems with {bounded} state-dependent additive uncertainty and convex \HYedit{polynomial} control constraints.
We first unify and formulate robust CBF verification and synthesis as \emph{multilevel} polynomial optimization problems (POP, Section~\ref{sec:formulation})~\cite{bennett22mp-hierarchical}. Particularly, given a parametric \HYedit{polynomial} robust CBF candidate, the verification problem is casted as a three-level POP hierarchically optimizing the uncertainty, control, and state. The synthesis problem then becomes a four-level POP with an additional search for the best parameter (Section~\ref{sec:formulation:multilevel}). Despite the intractability of multilevel optimization, we show that under convex control constraints and bounded uncertainty, the inner two levels of optimization (over the uncertainty and the control) can be eliminated in closed form via the KKT optimality conditions. Consequently, the verification problem reduces to a single-level POP over the state and the synthesis problem reduces to a min-max POP over the state and the parameter (Section~\ref{sec:formulation:reduction}). We then employ multilevel semidefinite programming (SDP) relaxations to approximately solve the POPs with \emph{asymptotic global convergence guarantees} (Section~\ref{sec:sdprelax}). Specifically, for the verification problem, we relax it via Lasserre's hierarchy of moment relaxations~\cite{lasserre01siopt-global}. For the synthesis problem, we adopt the two-stage relaxation in~\cite{lasserre11jgo-minmaxpop}: in the first stage we use SOS programming to find increasingly tight (piece-wise) polynomial lower bounds to the unknown \emph{value function} of the verification POP, and in the second stage we maximize the polynomial lower bounds to search for valid parameters leading to valid robust CBFs. The reader can refer to Fig.~\ref{fig:vanderpol_clean} for a pictorial illustration of our approach, where the dotted blue line is the unknown value function and the solid lines are the polynomial lower bounds found via SOS programming. We provide an in-depth study of our algorithm on the controlled Van der Pol oscillator (Section~\ref{sec:experiments}).

\textbf{Limitations.} (i) Our approach is limited by the current computational bottleneck in solving large-scale SDPs and can only handle low-dimensional systems. (ii) \HYedit{Our approach does not handle the composition of multiple CBFs yet.}

\section{Related Work}
\label{sec:relatedwork}
There are three fundamental problems in CBF: deployment, verification, and synthesis. The first focuses on designing a safe controller (online) when a valid CBF is given, while the latter two try to find such a valid CBF (offline).

{\bf Deployment}.
Online safe control given a CBF can be formulated as a quadratic program~\cite{ames2014cdc-cbforigin} in the absence of model uncertainty, and a second-order cone program~\cite{long2022ral-robustsocp,dhiman2021tac-robustsocp} or semi-infinite program~\cite{wei2022acc-uncertainsynthesis} when uncertainty exists. These convex programs can typically be solved efficiently.

{\bf Verification \& synthesis: SOS methods}. 
Sum-of-squares (SOS) programming has shown increasing potential in CBF verification and synthesis because it can handle infinite constraints and preserve computational tractability (in theory, SOS program is convex and can be solved in polynomial time, but practically it gets intractable when the size grows).
Early works \cite{prajna2004hscc-bfsynthesis,prajna2006automatica-bfsynthesis} consider barrier function synthesis without control input. With control input, classical methods formulate the synthesis problem as a bilinear SOS program, assuming a nominal~\cite{ames2019ecc-cbftheapp} or parameterized~\cite{wang2022arxiv-safetysynver} controller (which is a sufficient but unnecessary condition for a valid CBF). 
Recent works~\cite{clark22arxiv-cbf,dai2022arxiv-clfcbfsynveri,zhao22arxiv-cbfsos} remove the assumption of an explicit controller. For instance, \cite{zhao22arxiv-cbfsos} considers box-like control limits and formulates a single nonlinear SOS program. These works, however, share two shortcomings: (i) they base their algorithms upon alternation and lack global convergence guarantees; (ii) they do not handle dynamics uncertainty and general convex control constraints. Our framework aims to resolve these drawbacks.

{\bf Verification \& synthesis: sampling methods}.
Another popular line of research constructs a robust CBF by sampling a finite number of states and using Lipschitz conditions to bound the discretization error. They use evolutionary algorithms~\cite{wei2022acc-uncertainsynthesis}, constrained PAC learning~\cite{robey2021ifac-rcbfhybrid}, or convex optimization~\cite{lindemann2021arxiv-rcbfsafeexpert} to search for a robust barrier function. These algorithms require Lipschitz conditions and they can suffer from the curse of dimensionality (number of samples grows exponentially \wrt state dimension).

{\bf Other methods}. 
Handcrafted CBFs can be designed for specific systems such as controllable linear kscedit{systems}~\cite{clark2021automatica-controllablelinear} and Euler-Lagrange kscedit{systems}~\cite{cortez2020acc-eluerlag}. 
\cite{tonkens2022iros-refining}~uses Hamilton-Jacobi-Bellman reachability to iteratively refine a CBF, 
and~\cite{chen21cdc-backup} leverages a backup control policy.
Deep learning-based methods are also emerging. Safe reinforcement learning algorithms learn \cite{ma2022l4dc-saferl} or adapt \cite{chen2021lcss-saferl,westenbroek2021ifac-saferl} safety certificates along with a control policy without rigorous validation. 
\kscedit{Neural CBFs learn CBF candidates with neural networks~\cite{liu2023corl-safe}. Some of them also adopt a post-hoc validation after learning, such as satisfiability modulo theory~\cite{zhao2021fac-provableneuralcbf} and Lipschitz methods~\cite{jin2020arxiv-provableneuralcbf}.}

\section{A Multilevel POP Formulation for Verification \& Synthesis of Robust CBFs}
\label{sec:formulation}

Consider a control-affine system with additive uncertainty
\bea \label{eq:system}
\dot{x} = f(x) + g(x)u + J(x) \epsilon
\eea
where $x \in \Real{n}$ is the state, $u \in \bbU \subseteq \Real{m}$ is the control, $f(x):\Real{n} \mapsto \Real{n}$, $g(x): \Real{n} \mapsto \Real{n \times m}$, $J(x): \Real{n} \mapsto \Real{n \times d}$ and $\epsilon \in \Real{d}$ models the unknown disturbance.
We make the following assumptions for system~\eqref{eq:system}.

\begin{assumption}[Polynomial dynamics] \label{assume:dynamics}
The entries of $f,g,J$ are polynomials in $x$.
\end{assumption}

\begin{assumption}[Convex polynomial control constraints]\label{assume:control}
    $\bbU$ is a \HYedit{compact} convex set defined by finite polynomial inequality constraints, \HYedit{\ie $\bbU = \{ u \in \Real{m} \mid c_{u,i}(u) \leq 0, i=1,\dots,l_u \}$ where $\{c_{u,i}\}_{i=1}^{l_u}$ are convex polynomials. Moreover, there exists a $u_0$ such that $c_{u,i} (u_0) < 0$ for all $i=1,\dots,l_u$.}
\end{assumption}
This assumption is quite general as it includes typical control sets such as polytopes~\cite{dai2022arxiv-clfcbfsynveri}, boxes~\cite{zhao22arxiv-cbfsos}, and ellipsoids.

\begin{assumption}[Bounded uncertainty]\label{assume:epsilon}
$\norm{\epsilon} \leq M_{\epsilon}$.
\end{assumption}

\subsection{Robust Control Barrier Function}
A robust control barrier function is defined as follows.

\begin{definition}[Robust CBF]\label{def:robustcbf}
Let $b(x): \Real{n} \mapsto \Real{}$ be a smooth function and $\calC \doteq \{ x \in \Real{n} \mid b(x) \geq 0 \}$ be its \HYedit{compact} superlevel set. Then, $b(x)$ is a robust CBF for system~\eqref{eq:system} if there exists a class-$K$ function $\alpha$\footnote{\kscedit{$\alpha$ is a class-$K$ function if it is strictly increasing and $\alpha(0) = 0$.}} such that for all $x \in \calC$
\bea\label{eq:rcbforg}
\max_{u \in \bbU} \min_{\norm{\epsilon} \leq M_\epsilon } \dot{b}(x) \geq - \alpha (b(x)).
\eea
\kscedit{In other words}, if the system~\eqref{eq:system} starts inside $\calC$, then there exists a sequence of control such that the system trajectory remains inside $\calC$, regardless of the value of $\epsilon$. Due to Nagumo‘s Theorem~\cite{blanchini99automatica-set}, \eqref{eq:rcbforg} holds for all $x \in \calC$ if and only if
\bea 
\max_{u \in \bbU} \min_{\norm{\epsilon} \leq M_\epsilon} \dot{b}(x) \geq 0, \quad \forall x \in \partial \calC,
\eea
\ie there exists $u$ to pull the state back to $\calC$ whenever the state lies on the boundary of $\calC$ ($\partial\calC \doteq \{x\in\Real{n} \mid b(x) =0 \}$).
\end{definition}

\subsection{A Hierarchical POP Formulation}
\label{sec:formulation:multilevel}
Given a parametric \kscedit{CBF candidate} $b(x,\theta)$ \HYedit{that is polynomial in $x$ and $\theta$, and assume the parameter $\theta$ belongs to a compact semialgebraic set $\Theta \subseteq \Real{k}$, \ie $\Theta$ is defined by finite polynomial (in-)equalities}. We consider verifying and synthesizing a robust CBF from $b(x,\theta)$. By Definition~\ref{def:robustcbf}, we can formulate the following optimization problems. 

\begin{problem}[Verification]\label{prob:cbfverification} Fix $\theta$, define 
\bea \label{eq:verifyvaluefun}
V(\theta) := \min_{x \in \partial \calC} \max_{u \in \bbU} \min_{\norm{\epsilon} \leq M_\epsilon} \dot{b}(x,\theta).
\eea
If $V(\theta) \geq 0$ for a given $\theta$, then $b(x,\theta)$ is a robust CBF. Otherwise, a global minimizer $x^\star$ of~\eqref{eq:verifyvaluefun} with $V(\theta) < 0$ acts as a witness that $b(x,\theta)$ is not a robust CBF.
\end{problem}

$V(\theta)$ is called the \emph{value function} of the verification POP.

\begin{problem}[Synthesis]\label{prob:cbfsynthesis} Let $V(\theta)$ be as in~\eqref{eq:verifyvaluefun}, define
\bea\label{eq:cbfsynthesis}
V^\star = \max_{\theta \in \Theta} V(\theta), \quad \theta^\star \in \argmax_{\theta \in \Theta}V(\theta).
\eea
If $V^\star \geq 0$, then $b(x,\theta^\star)$ is a robust CBF. Otherwise ($V^\star < 0$),  there does not exist a robust CBF from the family $b(x,\theta)$.
\end{problem}

\kscedit{Note that, for verification}, any $x \in \partial \calC$ (\kscedit{not necessarily $x^\star$}) with ``$\max_u \min_\epsilon \dot{b}(x,\theta) < 0$''  can refute $b(x,\theta)$ as a valid CBF; and for synthesis, any $\theta$ (\kscedit{not necessarily $\theta^\star$}) with $V(\theta) \geq 0$ leads to a valid CBF $b(x,\theta)$. In fact, our synthesis method can return a set of valid $\theta$ with $V(\theta) \geq 0$. However, we choose to state our definitions as in Problems~\ref{prob:cbfverification}-\ref{prob:cbfsynthesis} to make it easier to streamline our algorithm.

\subsection{Reduction to Single-level and Min-max POP}
\label{sec:formulation:reduction}
Problems~\eqref{eq:verifyvaluefun} and~\eqref{eq:cbfsynthesis} are instances of \emph{hierarchical optimization} problems~\cite{bennett22mp-hierarchical}, which are in general very difficult to analyze and solve. Nonetheless, thanks to Assumptions~\ref{assume:control} and~\ref{assume:epsilon}, we will show that~\eqref{eq:verifyvaluefun} can be reduced to a single-level POP, while~\eqref{eq:cbfsynthesis} can be reduced to a min-max POP.

We start by expanding $\dot{b}(x,\theta)$:
\bea \label{eq:dotbexpand}
\hspace{-2mm}
\dot{b}(x,\theta)\! =\! \underbrace{\frac{\partial b}{\partial x}(x,\theta) f(x)}_{:= L_f b(x,\theta)}\! +\! \underbrace{\frac{\partial b}{\partial x}(x,\theta) g(x)}_{:=L_g b(x,\theta)} u\! +\! \underbrace{\frac{\partial b}{\partial x}(x,\theta) J(x)}_{:=L_J b(x,\theta)} \epsilon.\!\!\!
\eea
With~\eqref{eq:dotbexpand}, we develop $V(\theta)$ in~\eqref{eq:verifyvaluefun}:
\bea 
& \displaystyle\min_{x \in \partial \calC} \max_{u \in \bbU} \min_{\norm{\epsilon} \leq M_\epsilon} L_f b(x,\theta) + L_g b(x,\theta)u + L_J b(x,\theta) \epsilon \nonumber\\
= &  \displaystyle \min_{x \in \partial \calC} \max_{u \in \bbU} \bracket{L_f b(x,\theta)\! +\! L_g b(x,\theta)u\! +\! \min_{\norm{\epsilon}\! \leq M_\epsilon} L_J b(x,\theta) \epsilon  } \label{eq:developVstepone}\\
 = & \displaystyle \min_{x \in \partial \calC} \bracket{ L_f b(x,\theta)\! +\! \underbrace{\max_{u \in \bbU} L_g b(x,\theta) u}_{:=V_u^\star}\! +\! \underbrace{\min_{\norm{\epsilon} \leq M_\epsilon} L_J b(x,\theta) \epsilon}_{:=V_\epsilon^\star} } \label{eq:developVsteptwo}
\eea
where~\eqref{eq:developVstepone} holds because ``$L_f b(x,\theta)$'' and ``$L_g b(x,\theta) u$'' are constants \wrt \kscedit{``$\min_{\norm{\epsilon} \leq M_\epsilon}$''}; and~\eqref{eq:developVsteptwo} holds because ``$L_f b(x,\theta)$'' and ``$\min_{\norm{\epsilon} \leq M_\epsilon} L_J b(x,\theta) \epsilon$'' are constants \wrt ``$\max_{u \in \bbU}$''. We next show that both $V_\epsilon^\star$ and $V_u^\star$ in~\eqref{eq:developVsteptwo} can be solved in closed form.

{\bf (1) $V_\epsilon^\star$:}
$\norm{\epsilon}\leq M_\epsilon$ defines a $d$-dimensional ball with radius $M_\epsilon$ and it is easy to verify that choosing
\bea 
\epsilon^\star = \begin{cases}
    - M_\epsilon \frac{\kscedit{L_J b(x,\theta)\tran}}{ \norm{L_J b(x,\theta)}} & \text{if } \norm{L_J b(x,\theta)} \neq 0 \\
    \text{arbitrary} & \text{otherwise}
\end{cases}
\eea
leads to
\bea \label{eq:Vepsstar}
V^\star_\epsilon = - M_\epsilon \norm{L_J b(x,\theta)}.
\eea 

{\bf (2) $V_u^\star$:} 
according to Assumption~\ref{assume:control}, 
$u^\star$ is optimal for ``$\max_{u \in \bbU} L_g b(x,\theta) u$'' if and only if there exists a dual variable $\zeta^\star \in \Real{l_u}$ such that $(u^\star, \zeta^\star)$ satisfies the following \kscedit{Karush–Kuhn–Tucker (KKT)} optimality conditions~\cite{boyd04book-convex}:
\begin{subequations}\label{eq:kktconds}
    \begin{eqnarray}
        \hspace{-4mm} \text{\grayout{primal feasibility:} }& c_{u,i}(u) \leq 0,i=1,\dots,l_u \label{eq:kktprimal}\\
        \hspace{-4mm} \text{\grayout{dual feasibility:} }& \zeta_i \geq 0, i=1,\dots,l_u \label{eq:kktdual}\\
        \hspace{-4mm} \text{\grayout{stationarity:} }& - L_g b(x,\theta)\! +\! \displaystyle \sum_{i=1}^{l_u} \zeta_i \frac{\partial c_{u,i}}{\partial u}(u)\! =\! 0 \label{eq:kktstationary} \\
        \hspace{-4mm} \text{\grayout{complementarity:} }& \zeta_i c_{u,i}(u) = 0,i=1,\dots,l_u. \label{eq:kktcomp}
    \end{eqnarray}
\end{subequations}
Observe that~\eqref{eq:kktconds} is a set of polynomial (in-)equalities.
Let $\bbK(x,\theta) \subseteq \Real{m} \times \Real{l_u}$ be the set of optimal $u$ and $\zeta$ defined by~\eqref{eq:kktconds}, we have that
\bea \label{eq:Vustar}
V_{u}^\star = L_g b(x,\theta) u^\star, \quad (u^\star,\zeta^\star) \in \bbK(x,\theta).
\eea 
\begin{remark} The optimal control $u^\star$ is generally a nonsmooth function of $x$ and $\theta$. Consider $c_{u,i} = u_i^2 - 1,i=1,\dots,m$, \ie $\bbU = [-1,1]^m$ is an $m$-D box, $u^\star$ is Bang-Bang in each of its $m$ dimensions, depending on the sign of $L_g b(x,\theta)$. Therefore, (i)  methods assuming $u^\star$ to be smooth (\eg a polynomial~\cite{jarvis03cdc-some}) are conservative; (ii) recent work~\cite{zhao22arxiv-cbfsos} synthesizes CBF by considering all $2^m$ combinations of Bang-Bang controllers. By explicitly introducing dual variables $\zeta$, our derivation uses KKT conditions to bridge nonsmooth control and smooth polynomial optimization.
\end{remark}

Plugging~\eqref{eq:Vepsstar} and~\eqref{eq:Vustar} into~\eqref{eq:developVsteptwo}, we have that 
\bea 
\hspace{-2mm}V(\theta) =\!\!\!\! \min_{\substack{x \in \Real{n}, z \in \Real{} \\ u^\star \in \Real{m}, \zeta^\star \in \Real{l_u}}} & \!\!\!\!\! L_f b(x,\theta) + L_g b(x,\theta) u^\star - M_\epsilon z \label{eq:verifypop}\\
\subject & z^2 = \norm{L_J b(x,\theta)}^2 \label{eq:zlift}\\
        & b(x,\theta) = 0 \label{eq:xcon}\\
        & (u^\star, \zeta^\star) \in \bbK(x,\theta)
\eea
where $x \in \partial\calC$ is explicitly written as~\eqref{eq:xcon}; \eqref{eq:zlift} enforces $z = \pm \norm{L_J b(x,\theta)}$ and the ``$\min$'' in the objective~\eqref{eq:verifypop} will push $z = \norm{L_J b(x,\theta)}$ (and hence~\eqref{eq:Vepsstar} is implicit).

{\bf Standard POP}. Denote $y = [x;z;u^\star;\zeta^\star] \in \Real{N}$ with $N=n+1+m+l_u$, 
we can convert the verification problem~\eqref{eq:verifypop} to the following standard POP
\bea \label{eq:standardpop}
V(\theta) = \min_{y \in \Real{N}} \cbrace{ \varphi(y,\theta) \mymid \substack{ \displaystyle h_i(y,\theta) = 0, i=1,\dots,l_h \\[1mm] \displaystyle s_i(y,\theta) \geq 0,i=1,\dots,l_s } }.
\eea
Consequently, the original synthesis problem~\eqref{eq:cbfsynthesis} is equivalent to a min-max POP (where the ``$\max$'' is over $\theta \in \Theta$).

\HYedit{
\begin{remark}[Non-polynomial Dynamics]
Our framework is not restricted to polynomial dynamics. When the original dynamics~\eqref{eq:system} is non-polynomial, our framework still applies as long as the verification problem~\eqref{eq:verifypop} can be converted into a POP via a change of variables. For instance, assume the dynamics~\eqref{eq:system} and the CBF candidate $b(x,\theta)$ contain trigonometric terms in $\bar{x} \in x$, so long as the functions in problem~\eqref{eq:verifypop} are polynomials in $\sin(\bar{x})$ and $\cos(\bar{x})$, creating $\snew = \sin{\bar{x}},\cnew=\cos{\bar{x}}$ can turn~\eqref{eq:verifypop} into a POP with an extra polynomial constraint $\snew^2 + \cnew^2 = 1$. However, we stated Assumption~\ref{assume:dynamics} in the beginning to simplify our presentation. 
\end{remark}    
}

\section{Semidefinite Relaxation}
\label{sec:sdprelax}
We now apply semidefinite relaxations to solve the verification problem (Section~\ref{sec:sdpverify}) and the synthesis problem (Section~\ref{sec:sdpsynthesis}). \HYedit{Most of the results presented in this section are adapted from existing techniques proposed in~\cite{lasserre01siopt-global,lasserre11jgo-minmaxpop}. Our contribution here is to draw connections, for the first time, from global optimization of (min-max) POPs to verification and synthesis of robust CBFs. We hope these connections will inspire researchers working on similar problems.} 

Before diving into the details, we add $s_0 := 1$ into POP~\eqref{eq:standardpop} and assume the following conditions. 
\begin{assumption}[\kscedit{Feasible and Archimedean sets}]
    \label{assume:archimedeanness}
    For any $\theta \in \Theta$, (i) the feasible set of POP~\eqref{eq:standardpop} is non-empty; (ii) there exists $M_y > 0$ such that $M_y - \norm{y}^2 = \sum_{i=1}^{l_h} \mu_i h_i + \sum_{i=0}^{l_s} \sigma_i s_i$ holds for some polynomials $\{\mu_i \}_{i=1}^{l_h}$ and sum-of-squares (SOS) polynomials $\{ \sigma_i \}_{i=0}^{l_s}$ in $y$.
\end{assumption}

\HYedit{
   Assumption~\ref{assume:archimedeanness} is very general. First, the feasible set of~\eqref{eq:standardpop} is non-empty as long as $\partial \calC = \{x\in \Real{n} \mid b(x,\theta) = 0\}$ is non-empty, which can be satisfied via the design of $b(x,\theta)$. Second, if we know \emph{a priori} a bound $M_y$ on $y$, then adding a redundant constraint $M_y - \norm{y}^2 \geq 0$ to~\eqref{eq:standardpop} makes the Archimedean condition trivially satisfied. Because $\partial \calC$ and $\bbU$ are compact, it is easy to see that both $x$ and $u^\star$ are bounded. From~\eqref{eq:zlift} we observe $z$ is bounded because $z^2$ is a smooth function of $x$ (and $x$ belongs to a compact set). The next Proposition gives sufficient conditions for when the optimal dual variable $\zeta^\star$ is also bounded. 

\begin{proposition}[Bounded $\zeta^\star$]
    \label{prop:boundeddual}
If $\bbU$ is one of the following:
\begin{enumerate}[label=(\roman*)]
    \item Polytope: $c_{u,i}(u) = w_i\tran u + d_i \leq 0, i=1,\dots, l_u$ with no degenerate extreme points; 
    \item Box: $c_{u,i} (u) = u_i^2 - w_i^2, i=1,\dots,m$ with $w_i > 0$;
    \item Ellipsoid: a single $c_{u} = u\tran W u - 1$ with $W \succ 0$, 
\end{enumerate}
then there exists a constant $M_\zeta$ such that $\norm{\zeta^\star} \leq M_\zeta$.
\end{proposition}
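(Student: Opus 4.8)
The plan is to read a bound off directly from the stationarity condition~\eqref{eq:kktstationary}. Writing $L_g b(x,\theta) = \frac{\partial b}{\partial x}(x,\theta) g(x) \in \Real{1\times m}$, stationarity says that $L_g b(x,\theta)\tran$ equals the nonnegative combination $\sum_{i}\zeta_i^\star\,(\partial c_{u,i}/\partial u)(u^\star)\tran$ of the constraint gradients, while complementarity~\eqref{eq:kktcomp} forces $\zeta_i^\star = 0$ for every constraint inactive at $u^\star$. Since $b$ is a polynomial in $(x,\theta)$ and $g$ a polynomial in $x$, the map $(x,\theta)\mapsto L_g b(x,\theta)$ is continuous and therefore bounded on the compact set $\partial\calC\times\Theta$; fix $M_c$ with $\norm{L_g b(x,\theta)}\le M_c$ there. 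It then suffices to show that, in each of the three cases, the nonnegative coefficients in such a representation are controlled by $M_c$ together with fixed geometric constants of $\bbU$. Note that this will use only primal and dual feasibility, stationarity, and complementarity, so the resulting bound holds for \emph{every} pair in $\bbK(x,\theta)$, not merely for an optimal multiplier; in particular I never need the (true, but unnecessary) fact that KKT points of this linear program are global maximizers.

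First I would dispatch the box and the ellipsoid, where $\zeta^\star$ is essentially determined. For the box $c_{u,i}(u)=u_i^2-w_i^2$, stationarity is componentwise, $[L_g b(x,\theta)]_i = 2\zeta_i^\star u_i^\star$; if constraint $i$ is inactive (or $[L_g b(x,\theta)]_i = 0$) then complementarity and $w_i>0$ give $\zeta_i^\star=0$, and otherwise $|u_i^\star|=w_i$, so $\zeta_i^\star = |[L_g b(x,\theta)]_i|/(2w_i)$, whence $\norm{\zeta^\star}\le M_c/(2\min_j w_j)$. For the ellipsoid $c_u(u)=u\tran W u - 1$ with $W\succ 0$, stationarity reads $L_g b(x,\theta)\tran = 2\zeta^\star W u^\star$; if $\zeta^\star = 0$ we are done, and if $\zeta^\star>0$ then complementarity forces $(u^\star)\tran W u^\star = 1$, so substituting $u^\star = (2\zeta^\star)^{-1}W^{-1}L_g b(x,\theta)\tran$ gives $4(\zeta^\star)^2 = L_g b(x,\theta)\,W^{-1}\,L_g b(x,\theta)\tran \le M_c^2/\lambda_{\min}(W)$, i.e.\ $\zeta^\star \le M_c/(2\sqrt{\lambda_{\min}(W)})$.

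The substantive case --- and the only one using ``no degenerate extreme points'' --- is the polytope. Here $(\partial c_{u,i}/\partial u)\tran = w_i$ is constant, so stationarity reduces to $L_g b(x,\theta)\tran = \sum_{i\in A}\zeta_i^\star w_i$ with $\zeta_i^\star\ge 0$, where $A = A(u^\star)$ is the set of constraints active at the feasible point $u^\star$. The key claim is that $\{w_i\}_{i\in A}$ is linearly independent for \emph{any} feasible $u^\star$: let $F$ be the face of $\bbU$ cut out by $\{w_i\tran u + d_i = 0 : i \in A\}$, so $u^\star$ lies in its relative interior and $A$ is precisely the set of constraints tight on all of $F$; choose a vertex $v$ of $F$ (it exists, since $\bbU$ and hence $F$ is a nonempty compact polytope, and $v$ is also a vertex of $\bbU$); non-degeneracy forces exactly $m$ constraints to be active at $v$, and since $v$ is a vertex their gradients $\{w_i\}_{i\in A(v)}$ have rank $m$, hence form a basis of $\Real{m}$, so the subfamily $\{w_i\}_{i\in A}$ is linearly independent. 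Consequently the representation of $L_g b(x,\theta)\tran$ on the active set is unique: $(\zeta_i^\star)_{i\in A} = W_A\pinv\, L_g b(x,\theta)\tran$ with $W_A = [w_i]_{i\in A}$ of full column rank, so $\norm{\zeta^\star}\le \norm{W_A\pinv}\,M_c$. Taking $M_\zeta$ to be $M_c$ times the maximum of $\norm{W_A\pinv}$ over the finitely many index sets $A\subseteq\{1,\dots,l_u\}$ with linearly independent $\{w_i\}_{i\in A}$ then finishes case (i), and the box and ellipsoid computations above supply the analogous constants for cases (ii) and (iii). I expect the face--vertex argument establishing linear independence of the active gradients to be the main obstacle; the rest is elementary linear algebra and bookkeeping.
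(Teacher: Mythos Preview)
Your proposal is correct and follows the same strategy as the paper: in each case you read $\zeta^\star$ off the stationarity equation~\eqref{eq:kktstationary}, use complementarity to restrict to the active constraints, and bound via the boundedness of $L_g b(x,\theta)$ on the compact set $\partial\calC\times\Theta$; your box and ellipsoid computations match the paper's essentially line for line. The one place you go further is the polytope case: the paper simply asserts (citing LP theory) that at an optimal $u^\star$ there are $m'\le m$ active linearly independent constraints and bounds $\norm{\zeta^\star}^2\le\norm{L_g b}^2/\lambda_{\min}(W\tran W)$ for that particular active matrix $W$, whereas your face--vertex argument actually proves the linear independence of $\{w_i\}_{i\in A(u^\star)}$ for \emph{any} feasible $u^\star$ and then correctly maximizes $\norm{W_A\pinv}$ over the finitely many admissible index sets to get a bound independent of which active set arises---this is a cleaner and more complete version of the same idea.
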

\begin{proof}
    See Appendix~\ref{app:proof:prop:boundeddual}. \hfill \qedsymbol
\end{proof}
Section~\ref{sec:experiments} computes specific bounds for our test problem.}

\subsection{Verification: Lasserre's Hierarchy}
\label{sec:sdpverify}
We apply Lasserre's hierarchy of moment-SOS semidefinite relaxations~\cite{lasserre01siopt-global} to solve the POP~\eqref{eq:standardpop}. Due to space constraints, we only give a brief overview and refer the interested reader to~\cite{lasserre01siopt-global} or~\cite[Section 2.2]{yang22pami-certifiably} for details. 

Let $t_{\max} = \max\{\deg{\varphi}, \{\deg{h_i}\}_{i=1}^{l_h}, \{ \deg{s_i} \}_{i=0}^{l_s}  \}$ be the maximum degree of the POP~\eqref{eq:standardpop} with a fixed $\theta$, $\kappa \in \bbN$ be any integer such that $2\kappa \geq t_{\max}$, and $\kappa_0$ be the smallest such $\kappa$. Denote by $[y]_\kappa$ the vector of monomials in $y$ with degree up to $\kappa$, and by $Y_{\kappa} = [y]_\kappa [y]_\kappa\tran$ the moment matrix in $y$ of order $\kappa$. Further, let
\bea 
S_i = s_i(y,\theta) \cdot Y_{\kappa - \ceil{\deg{s_i}/2}},i=1,\dots,l_s
\eea
be the localizing matrix associated with $s_i$ (so that all the monomials in $S_i$ have degree at most $2\kappa$). With $Y = (Y_\kappa, S_1,\dots,S_{l_s})$, consider the semidefinite program (SDP):
\bea \label{eq:sdpverify}
\rho_{\kappa} = \min_{Y} \{ \inprod{C}{Y} \mid Y \succeq 0,\ \ \calA(Y) = e \}
\eea
where $C = (C_0,C_1,\dots,C_{l_s})$ is constant, has the same size as $Y$, and $\inprod{C}{Y} = \varphi(y,\theta)$;\footnote{$\inprod{C}{Y} := \trace{C_0 Y_0} + \dots + \trace{C_{l_s} S_{l_s}}$. We can have $C_1 = \dots = C_{l_s} = 0$ and $C_0$'s entries be the coefficients of $\kscedit{\varphi(y,\theta)}$.} $\calA(Y) = e$ collects all linear dependencies in $Y$ (\eg entries of $S_i$ are linear combinations of entries of $Y_\kappa$). It is clear that SDP~\eqref{eq:sdpverify} is a convex relaxation of the POP~\eqref{eq:standardpop} because every feasible point $y$ of the POP can generate a $Y$ that is feasible for the SDP via the moment and localizing matrices. The following theorem states that, as $\kappa \rightarrow \infty$, solving the SDP can recover the global optimizers of the POP.

\begin{theorem}[Lasserre's hierarchy~\cite{lasserre01siopt-global,henrion05-detecting}]
\label{thm:lasserre}
Let $\rho_\kappa^\star$ and $Y^\star = (Y_\kappa^\star, S_1^\star,\dots,S_{l_s}^\star)$ be the optimal value and one optimal solution of the SDP~\eqref{eq:sdpverify}, then
\begin{enumerate}[label=(\roman*)]
    \item $\rho_\kappa^\star \leq V(\theta)$ for any $\kappa$, and $\rho_\kappa^\star \rightarrow V(\theta)$ as $\kappa \rightarrow \infty$;
    \item if $Y_\kappa^\star$ satisfies the flatness condition, \ie $r=\rank{ Y_{\kappa' - \kappa_0}^\star } = \rank{ Y_{\kappa'}^\star }$ for some $\kappa_0 \leq \kappa' \leq \kappa$, then $\rho^\star_\kappa = V(\theta)$ and the relaxation is said to be tight or exact. Furthermore, $r$ global optimizers of the POP~\eqref{eq:standardpop} can be extracted from $Y_\kappa^\star$.
\end{enumerate}
\end{theorem}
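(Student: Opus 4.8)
The plan is to treat this statement as Lasserre's classical convergence theorem specialized to POP~\eqref{eq:standardpop}, and to reassemble the standard moment--SOS argument. Its two non-elementary ingredients are Putinar's Positivstellensatz (for part (i)) and the Curto--Fialkow flat extension theorem (for part (ii), exploited as in~\cite{henrion05-detecting}). Everything else is bookkeeping; the role of Assumption~\ref{assume:archimedeanness} — together with Proposition~\ref{prop:boundeddual} and the boundedness discussion preceding it — is precisely to certify the hypotheses those two theorems require for our reduced POP.

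\textbf{Part (i): lower bound, monotonicity, convergence.} First I would observe that any POP-feasible $y$ produces an SDP-feasible point of~\eqref{eq:sdpverify}: set $Y_\kappa = [y]_\kappa [y]_\kappa\tran$ and $S_i = s_i(y,\theta)\, Y_{\kappa - \ceil{\deg{s_i}/2}}$; this is positive semidefinite, satisfies the linear identities $\calA(Y)=e$, and attains $\inprod{C}{Y} = \varphi(y,\theta)$. Taking the infimum over $y$ gives $\rho_\kappa^\star \le V(\theta)$. Monotonicity $\rho_\kappa^\star \le \rho_{\kappa+1}^\star$ follows because an SDP-feasible point at order $\kappa+1$ restricts to one at order $\kappa$, so $\{\rho_\kappa^\star\}$ is nondecreasing with a limit $\le V(\theta)$. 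For the matching lower bound I would pass to the dual SOS program: fix $\epsilon>0$; then $\varphi(y,\theta) - (V(\theta)-\epsilon) \ge \epsilon > 0$ on the feasible set, which is Archimedean by Assumption~\ref{assume:archimedeanness}(ii), so Putinar yields $\varphi - (V(\theta)-\epsilon) = \sigma_0 + \sum_{i=1}^{l_s}\sigma_i s_i + \sum_{i=1}^{l_h}\mu_i h_i$ with SOS $\sigma_i$ and polynomials $\mu_i$. This representation has finite degree, so $\gamma = V(\theta)-\epsilon$ is feasible for the SOS relaxation once $\kappa$ is large enough; weak moment/SOS duality then gives $\rho_\kappa^\star \ge V(\theta)-\epsilon$, and letting $\epsilon \downarrow 0$ yields $\rho_\kappa^\star \to V(\theta)$.

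\textbf{Part (ii): exactness and extraction.} Suppose $\rank{Y_{\kappa'-\kappa_0}^\star} = \rank{Y_{\kappa'}^\star} = r$ for some $\kappa_0 \le \kappa' \le \kappa$. By the Curto--Fialkow flat extension theorem, the truncated moment data carried by $Y_{\kappa'}^\star$ is the moment sequence of an $r$-atomic probability measure $\mu = \sum_{j=1}^r \lambda_j \delta_{y^{(j)}}$ with all $\lambda_j>0$, and the positive semidefinite localizing matrices force $\mathrm{supp}\,\mu$ into the POP feasible set $\{y : h_i(y,\theta)=0,\ s_i(y,\theta)\ge 0\}$. Then $\rho_\kappa^\star = \inprod{C}{Y^\star} = \int \varphi(\cdot,\theta)\, d\mu = \sum_{j=1}^r \lambda_j \varphi(y^{(j)},\theta) \ge V(\theta)$, since each $y^{(j)}$ is feasible; combined with part (i) this forces $\rho_\kappa^\star = V(\theta)$, and the inequality being an equality forces $\varphi(y^{(j)},\theta)=V(\theta)$ for every $j$, i.e. the $r$ atoms are global minimizers of~\eqref{eq:standardpop}. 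The explicit recovery of the $y^{(j)}$ from a column-echelon factorization of $Y_{\kappa'}^\star$ followed by eigenvalue computations on the associated multiplication matrices is the extraction algorithm of~\cite{henrion05-detecting}, which I would cite rather than reprove.

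\textbf{Main obstacle.} Essentially all the difficulty is imported: Putinar's Positivstellensatz and the Curto--Fialkow flat extension theorem are the deep external results, and re-deriving either is out of scope. The only point needing genuine care in our setting is confirming the Archimedean hypothesis for the reduced POP~\eqref{eq:standardpop} — which is exactly what Assumption~\ref{assume:archimedeanness} postulates and what the boundedness arguments around Proposition~\ref{prop:boundeddual} make legitimate — after which the argument is routine.
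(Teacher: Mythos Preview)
Your proof sketch is a correct reconstruction of the standard moment--SOS argument for Lasserre's hierarchy: the lower bound and monotonicity are bookkeeping, convergence follows from Putinar's Positivstellensatz under the Archimedean assumption, and the flatness-based exactness plus extraction is precisely the Curto--Fialkow / Henrion--Lasserre mechanism you invoke. There is nothing to fault in the mathematics.

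However, the comparison you were asked to make is degenerate here: the paper does \emph{not} supply its own proof of Theorem~\ref{thm:lasserre}. The theorem is stated as a citation of \cite{lasserre01siopt-global,henrion05-detecting} and immediately followed by commentary on finite convergence in practice; no argument is given in the paper body or appendix. So your proposal is not an alternative route to the paper's proof --- it is simply a (correct) sketch of the classical proof that the paper imports wholesale. If the goal is to match the paper, the appropriate response is to cite the references and move on; if the goal is to be self-contained, your outline is the right one and the only caveat is the one you already flag: Putinar and Curto--Fialkow are the genuine content and would themselves need to be cited rather than reproved.
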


Although the convergence in Theorem~\ref{thm:lasserre} is asymptotic, many practical problems~\cite{yang22mp-inexact} \kscedit{observe finite convergence}, \ie $\rho_\kappa^\star$ coincides with the global optimum of the original POP at a finite (and often small) relaxation order $\kappa$, and this can be proved with additional assumptions of the POP~\cite{nie14mp-optimality}. For the purpose of verification, $\rho^\star_\kappa \geq 0$ for some $\kappa$ is sufficient to certify the correctness of a robust CBF. 


\subsection{Synthesis: Polynomial Approximation}
\label{sec:sdpsynthesis}
A naive approach to synthesize a robust CBF $b(x,\theta)$ is to randomly sample parameters $\theta \in \Theta$ until $V(\theta) \geq 0$. This approach can work well if $\Theta$ is a finite set. Another potential approach is to employ differentiable optimization, \ie computing $\partial V(\theta) / \partial \theta$ after solving the verification problem and performing gradient ascent to optimize $\theta$. Since the verification problem is a nonconvex POP, the convergence of this approach in solving the min-max synthesis problem is unclear (differentiable optimization typically works well when the inner problem is convex~\cite{bennett22mp-hierarchical}). In the following, we introduce \HYedit{the method proposed in~\cite{lasserre11jgo-minmaxpop} that is based on polynomial approximation}. The goal is to use a set of polynomials to lower bound $V(\theta)$, and use Lasserre's hierarchy to maximize the polynomial lower bounds. 

Let $\psi$ be the uniform distribution supported on $\Theta$ so that 
\bea \label{eq:computemoments}
\gamma_{\beta} = \int_{\Theta} \theta^\beta d\psi(\theta) , \quad \beta \in \bbN^k
\eea
can be computed for any monomial $\theta^\beta = \theta_1^{\beta_1} \theta_2^{\beta_2}\cdots \theta_k^{\beta_k}$.\footnote{If $\gamma_\beta$ cannot be computed on $\Theta$, then one can augment $\Theta$ into a simple set $\tldTheta \supset \Theta$ (\eg a box or a ball) such that $\gamma_\beta$ is easy to compute on $\tldTheta$.}
Let $\nu \in \bbN$ such that $2\nu$ is no smaller than the maximum degree of the POP~\eqref{eq:standardpop}, and let $\nu_0$ be the smallest such $\nu$.\footnote{Note that in general $\nu_0 \neq \kappa_0$ introduced in Section~\ref{sec:sdpverify}. This is because $\theta$ is considered as constant when counting the maximum degree of the POP~\eqref{eq:standardpop} for verification, while considered the same as $y$ as an unknown variable when performing synthesis. For example, $\psi(y,\theta) = y^2 \theta^4$ has degree $2$ in $y$, but degree $6$ in $[y;\theta]$.} Denote by $\bbN^{k}_{2\nu}$ the set of $k$-dimensional integers summing up to $2\nu$.
Consider the following sum-of-squares (SOS) problem
\bea \label{eq:soslowerbound}
\hspace{-4mm} \max_{\lambda,\sigma,\mu} & \sum_{\beta \in \bbN^{k}_{2\nu}} \lambda_\beta \gamma_\beta \\
\hspace{-4mm} \subject & \substack{ \varphi(y,\theta) - \sum_{\beta \in \bbN^k_{2\nu}} \lambda_\beta \theta^\beta = \\ \sum_{i=0}^{\tldl_s} \sigma_i(y,\theta) s_i(y,\theta) + \sum_{i=1}^{l_h} \mu_i(y,\theta) h_i(y,\theta)  } \\
& \sigma_i \in \Sigma[y,\theta],\deg{\sigma_i s_i} \leq 2\nu, i=0,\dots,\tldl_s \\
& \mu_i \in \poly{y,\theta}, \deg{\mu_i h_i} \leq 2\nu, i=1,\dots,l_h
\eea
where $\poly{y,\theta}$ (resp. $\Sigma[y,\theta]$) is the set of real polynomials (resp. SOS polynomials) in $[y;\theta]$.
Let $\lambda^\star$ be \kscedit{its global optimizer (or one of its global optimizers)}. Denote
\bea \label{eq:polylowerbound}
V_{\nu}(\theta) := \sum_{\beta \in \bbN^k_{2\nu}} \lambda^\star_\beta \theta^\beta, \quad \nu \geq \nu_0
\eea 
as the polynomial in $\theta$ whose coefficients are $\lambda^\star$. The following theorem states that $V_{\nu}(\theta)$ is a polynomial lower bound for the unknown (and usually nonsmooth) $V(\theta)$. Moreover, $V_{\nu}(\theta)$ converges to $V(\theta)$ as $\nu$ increases.

\begin{theorem}[Global Convergence~\cite{lasserre11jgo-minmaxpop}]
    \label{thm:globalconvergence}
    Let Assumption~\ref{assume:archimedeanness} hold and $V_\nu (\theta)$ be as in \eqref{eq:polylowerbound}, we have 
    \begin{enumerate}[label=(\roman*)]
        \item $V_\nu(\theta) \leq V(\theta)$ for all $\theta \in \Theta$ and $\nu \geq \nu_0$;
        \item $\int_{\Theta} \abs{V(\theta) - V_\nu(\theta)} d\psi(\theta) \rightarrow 0$ as $\nu \rightarrow \infty$;
        \item let $\tldV_{\nu}(\theta) := \max\{ V_{\nu_0}(\theta),\dots,V_\nu(\theta) \}$, then $V_\nu(\theta) \leq \tldV_\nu(\theta) \leq V(\theta)$ for all $\theta \in \Theta$ and $\tldV_\nu(\theta) \rightarrow V(\theta)$, $\psi$-almost uniformly on $\Theta$ as $\nu \rightarrow \infty$.
    \end{enumerate}
    Furthermore, let 
    \bea \label{eq:maxlowerbound}
    V_\nu^\star := \max_{\theta \in \Theta} V_\nu(\theta), \quad \nu \geq \nu_0
    \eea 
    and $\theta^\star_\nu$ be a global optimizer. Denote
    \bea \label{eq:bestmaximizer}
    \hatV_{\nu}^\star := \max_{\nu_0 \leq l \leq \nu} V^\star_l = V_{\tau(\nu)}(\theta_{\tau(\nu)}^\star).
    \eea 
    for some $\tau(\nu) \in \{ \nu_0,\dots,\nu \}$. Then we have 
    \begin{enumerate}[label=(\roman*)]
        \setcounter{enumi}{3}
        \item $\hatV_{\nu}^\star \rightarrow V^\star$ in~\eqref{eq:cbfsynthesis} as $\nu \rightarrow \infty$
        \item if $V(\theta)$ is continuous on $\Theta$ then $V^\star = V(\theta^\star)$ for some $\theta^\star \in \Theta$; and any accumulation point $\bar{\theta}$ of the sequence $(\theta_{t(\nu)}^\star) \subset \Theta$ is a global optimizer of~\eqref{eq:cbfsynthesis}. In particular, if $\theta^\star$ is unique, then $\theta_{t(\nu)}^\star  \rightarrow \theta^\star$ as $\nu \rightarrow \infty$. 
    \end{enumerate}
\end{theorem}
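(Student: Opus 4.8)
The proof of Theorem~\ref{thm:globalconvergence} is essentially a transcription of the main convergence results in~\cite{lasserre11jgo-minmaxpop} into our notation, so the plan is to verify that our POP~\eqref{eq:standardpop} fits the framework of that reference and then invoke its results. First I would note that, with $\theta$ promoted to a decision variable ranging over the compact semialgebraic set $\Theta$, the pair of nested problems $V(\theta) = \min_y \{\varphi(y,\theta) : h_i(y,\theta)=0, s_i(y,\theta)\geq 0\}$ and $V^\star = \max_{\theta\in\Theta} V(\theta)$ is exactly an instance of the robust min-max POP studied in~\cite{lasserre11jgo-minmaxpop}, once Assumption~\ref{assume:archimedeanness} guarantees nonempty feasibility for every $\theta$ and the Archimedean/Putinar condition for the constraint set in $y$ (uniformly in $\theta$, which holds because the certificate $M_y - \norm{y}^2 = \sum \mu_i h_i + \sum \sigma_i s_i$ can be taken with coefficients polynomial in $\theta$, or simply because we added the redundant ball constraint). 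The moments $\gamma_\beta$ in~\eqref{eq:computemoments} are finite since $\Theta$ is compact, so the SOS problem~\eqref{eq:soslowerbound} is well posed.

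For part (i), the key observation is feasibility of~\eqref{eq:soslowerbound}: any polynomial $p(\theta) = \sum_\beta \lambda_\beta \theta^\beta$ admissible in~\eqref{eq:soslowerbound} satisfies $\varphi(y,\theta) - p(\theta) \in \mathrm{QM}(s_i) + \mathrm{Ideal}(h_i)$, hence is nonnegative on the feasible set of the POP for each fixed $\theta$; minimizing over $y$ gives $p(\theta)\leq V(\theta)$ pointwise on $\Theta$. Since the objective of~\eqref{eq:soslowerbound} is $\int_\Theta p \, d\psi$, the optimizer $V_\nu$ is the admissible polynomial of degree $\leq 2\nu$ with largest $\psi$-mean, and it inherits $V_\nu(\theta)\leq V(\theta)$. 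Part (iii) is then immediate: the finite maximum $\tldV_\nu = \max_{\nu_0\leq l\leq\nu} V_l$ still lies below $V(\theta)$ pointwise and is nondecreasing in $\nu$. For part (ii), I would appeal to the density argument in~\cite{lasserre11jgo-minmaxpop}: since $V$ is the value function of a POP with Archimedean constraints depending polynomially on the parameter, it is upper semicontinuous and can be approximated from below in $L^1(\psi)$ by admissible polynomials (this uses a Stone–Weierstrass-type argument combined with Putinar's Positivstellensatz applied with a small slack $\varepsilon$), so $\int_\Theta (V - V_\nu)\,d\psi \to 0$; combined with $V_\nu \leq \tldV_\nu \leq V$ and monotonicity of $\tldV_\nu$, Dini-type reasoning upgrades this to $\psi$-almost uniform convergence of $\tldV_\nu$ to $V$.

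For the maximization statements, once we have $V_\nu(\theta)\leq V(\theta)$ for all $\theta$ we get $V_\nu^\star = \max_\theta V_\nu(\theta) \leq V^\star$, and taking the running maximum $\hatV_\nu^\star = \max_{\nu_0\leq l\leq \nu} V_l^\star$ preserves this bound while being monotone. For the reverse inequality, I would use part (ii)/(iii): $\psi$-almost uniform convergence of $\tldV_\nu$ to $V$ on a set of full $\psi$-measure (which has nonempty interior, or at least positive measure near any maximizer since $\Theta$ is the support of $\psi$) forces $\sup_\Theta \tldV_\nu \to \sup_\Theta V = V^\star$; since $\sup_\Theta \tldV_\nu \leq \hatV_\nu^\star \leq V^\star$ (the first because $\tldV_\nu(\theta)\leq \max_l V_l(\theta) \leq \max_l V_l^\star \leq \hatV_\nu^\star$ — here one has to be mildly careful, using $\sup_\theta \max_l V_l(\theta) \leq \max_l \sup_\theta V_l(\theta)$), we conclude $\hatV_\nu^\star \to V^\star$, which is (iv). Finally (v): if $V$ is continuous on the compact set $\Theta$ it attains its max at some $\theta^\star$; each $V_\nu^\star = V_\nu(\theta_\nu^\star)$ with $\theta_\nu^\star$ in the compact $\Theta$, so $(\theta_{\tau(\nu)}^\star)$ has accumulation points, and for any such $\bar\theta = \lim_j \theta_{\tau(\nu_j)}^\star$ we pass to the limit in $\hatV_{\nu_j}^\star = V_{\tau(\nu_j)}(\theta_{\tau(\nu_j)}^\star) \leq V(\theta_{\tau(\nu_j)}^\star)$, using $\hatV_{\nu_j}^\star \to V^\star$ from (iv) and continuity of $V$, to get $V^\star \leq V(\bar\theta)\leq V^\star$, so $\bar\theta$ is a maximizer; uniqueness of $\theta^\star$ then yields convergence of the whole sequence.

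The main obstacle, and the place where I would have to be most careful rather than just cite, is part (ii) — establishing the $L^1(\psi)$ lower approximation of the nonsmooth value function $V$ by admissible polynomials, and then the upgrade to almost-uniform convergence. This is where the real content of~\cite{lasserre11jgo-minmaxpop} lives: one needs that for any $\varepsilon>0$ the function $V(\theta)-\varepsilon$ is (after a minor regularization) dominated by some polynomial lying in the quadratic module, which in turn relies on Putinar's Positivstellensatz being applicable uniformly over the parameter and on the feasible set of the POP varying in a benign (\eg lower semicontinuous) way with $\theta$ — a consequence of nonemptiness of $\partial\calC$ from Assumption~\ref{assume:archimedeanness} together with compactness. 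Everything else (parts (i), (iii), (iv), (v)) is bookkeeping once (ii) is in hand, so in the write-up I would state (i) and (iii) with short self-contained arguments, cite~\cite[Theorem~...]{lasserre11jgo-minmaxpop} for (ii), and give the two-line limiting arguments for (iv) and (v).
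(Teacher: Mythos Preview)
Your proposal is correct and aligns with the paper's treatment: the paper does not prove Theorem~\ref{thm:globalconvergence} but simply imports it from~\cite{lasserre11jgo-minmaxpop}, relying on Assumption~\ref{assume:archimedeanness} to place POP~\eqref{eq:standardpop} in that framework. Your sketch actually goes further than the paper by outlining the mechanics of (i)--(v), which is fine and matches the reference's argument structure.
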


A few remarks are in order about Theorem~\ref{thm:globalconvergence}. First, each $V_{\nu}(\theta)$ is a valid lower bound for $V(\theta)$ on the set $\Theta$ and as $\nu$ increases, $\int_{\Theta} \abs{V(\theta) - V_\nu(\theta)} d\psi(\theta)$ tends to zero. Second, assume one has computed a sequence of $V_{l}(\theta)$ for $l=\nu_0,\dots,\nu$, then the point-wise maximum $\tldV_{\nu}(\theta) = \max_{\nu_0 \leq l \leq \nu} V_l (\theta)$ is a tighter lower bound for $V(\theta)$ in that $\tldV_\nu (\theta)$ converges to $V(\theta)$ almost everywhere on $\Theta$. Third, if one can compute the global maximizers of the sequence of lower bounds $\{ V_{l}(\theta) \}_{l=\nu_0}^{\nu}$, then the best maximizer (\cf~\eqref{eq:bestmaximizer}) converges to the global optimizer of the min-max POP. To compute the global maximizer in~\eqref{eq:maxlowerbound}, we use Lasserre's hierarchy introduced in Section~\ref{sec:sdpverify}. Because~\eqref{eq:maxlowerbound} is a low-dimensional POP only in $\theta$, empirically it is easy to solve~\eqref{eq:maxlowerbound} to global optimality. Fourth, unlike Theorem~\ref{thm:lasserre} which states that it is possible to detect and certify global optimality via the flatness condition, Theorem~\ref{thm:globalconvergence} does not tell us how to numerically detect when $V^\star = \hatV^\star_{\nu}$ happens. This implies that our algorithm cannot claim the non-existence of a valid robust CBF in $\{b(x,\theta) \mid \theta \in \Theta \}$. Last but not the least, assuming $\{ \theta \mid V_\nu(\theta) \geq 0 \}$ is non-empty for some $\nu$, then the lower bound polynomial $V_\nu (\theta)$ enables us to choose a parameter $\theta$ that optimizes some performance metric $\Psi(\theta)$ via solving the POP ``$\min_{\theta \in \Theta}\{ \Psi(\theta) \mid V_\nu (\theta) \geq 0 \}$''. For example, as we will show in Section~\ref{sec:experiments}, $\Psi(\theta)$ can be chosen as (inversely) proportional to the volume of the set $\calC$. 

\section{Applications and Experiments}
\label{sec:experiments}

In this section, we apply our verification and synthesis method to the controlled Van der Pol oscillator. We first consider a Van der Pol oscillator without uncertainty (Section~\ref{sec:cleanvanderpol}) and show that our algorithm readily reproduces the circular CBF in~\cite{clark22arxiv-cbf}. With uncertainty (Section~\ref{sec:uncertainvanderpol}), we show how to synthesize elliptical robust CBFs with a strictly positive value function.

\subsection{A Gentle Start: Clean Van der Pol Oscillator}
\label{sec:cleanvanderpol}
Consider a clean controlled Van der Pol Oscillator~\cite{clark22arxiv-cbf}
\bea \label{eq:cleanvdpodynamics}
\dx = \bmat{c}
\dx_1 \\
\dx_2 \emat 
\!=\! \bmat{c}
x_2 \\
\frac{1}{2} (1-x_2^2) x_2  -x_1
\emat + \bmat{c}
0 \\
x_1 
\emat u.
\eea
Let 
\begin{equation}\label{eq:cleanvdpou}
        \bbU = \{u \in \Real{} \mid u^2 - u_\max^2 \leq 0 \}. 
\end{equation}

{\bf Circular CBF}. Consider a circular CBF candidate
\bea \label{eq:cleanvdpocbf}
b(x,\theta) = \theta - \norm{x}^2, \quad \theta \in \Theta := [0,\theta_\max].
\eea
\kscedit{Clearly, }Assumption~\ref{assume:archimedeanness} holds and Appendix~\ref{app:bound:y:cleanvdp} computes the bound for $y$ in POP~\eqref{eq:verifypop}.

{\bf Verification}. 
We choose $u_\max = 5, \theta_\max = 2$, and solve the $\kappa=4$ SDP relaxation of the verification POP~\eqref{eq:standardpop}.\footnote{We use the SDP relaxation implementation provided in \url{https://github.com/MIT-SPARK/CertifiablyRobustPerception}, and solve the SDP using MOSEK.} Fig.~\ref{fig:vanderpol_clean}(a) blue dotted line plots the values of the SDP relaxation at samples $\theta = 0,0.1,0.2,\dots,2.0$. In all cases, the SDP relaxation is tight and can recover the global optimizers of the verification POP~\eqref{eq:standardpop}. For example, when $\theta = 0.1$, $x^\star = [0;\pm 0.3162]$ attains the global minimum $V(\theta) = -0.09$; when $\theta = 1.1$, $x^\star = [\pm 1.0488;0]$ attains the global minimum $V(\theta) = 0$. From Fig.~\ref{fig:vanderpol_clean}(a) blue dotted line, we observe that $b(x,\theta)$ is a valid CBF for $\theta=0$ and $\theta \geq 1$, which agrees with the result from~\cite{clark22arxiv-cbf}. However, our verification algorithm can do better than that of~\cite{clark22arxiv-cbf}: when $b(x,\theta)$ is not valid for $\theta \in (0,1)$, our algorithm recovers the $x^\star$ that attains a negative $V(\theta)$, while the SOS-based method~\cite{clark22arxiv-cbf} becomes infeasible without producing a witness. 

\begin{figure}
    \begin{minipage}{\textwidth}
    \centering
    \begin{tabular}{cc}
        \begin{minipage}{0.49\textwidth}
            \centering
            \includegraphics[width=\columnwidth]{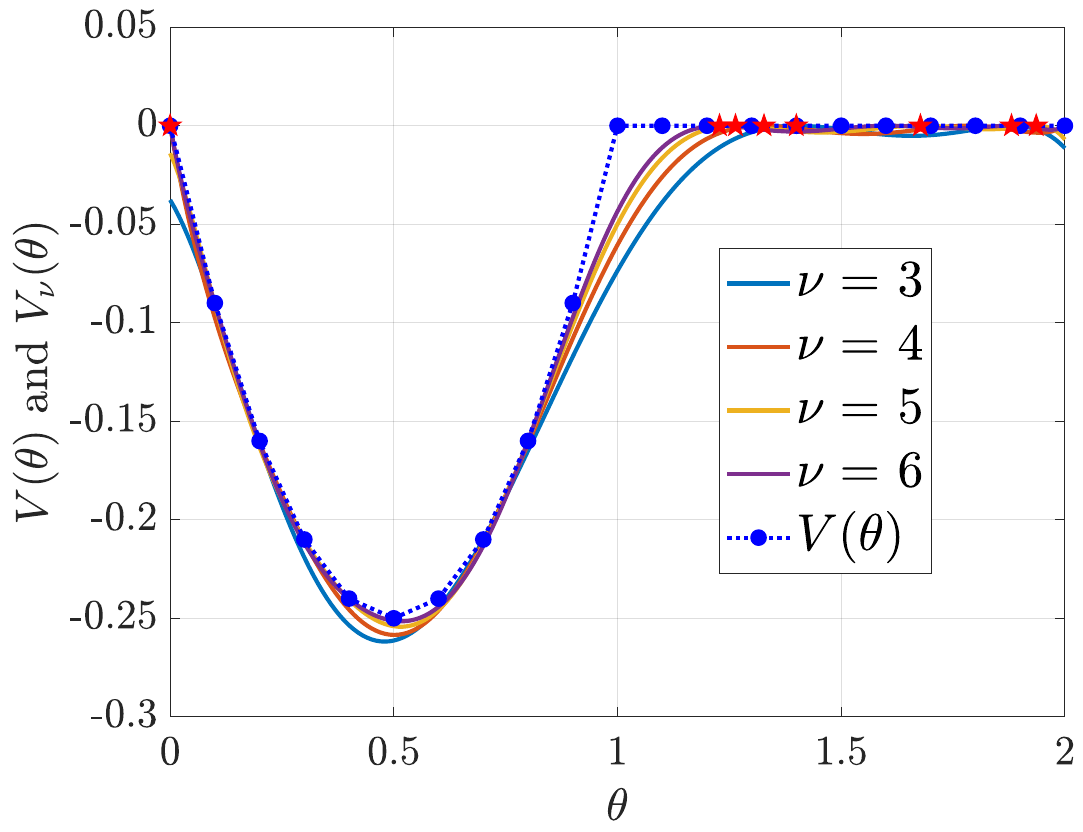}\\
            (a) $\Theta = [0,2]$
        \end{minipage}
        &
        \begin{minipage}{0.49\textwidth}
            \centering
            \includegraphics[width=\columnwidth]{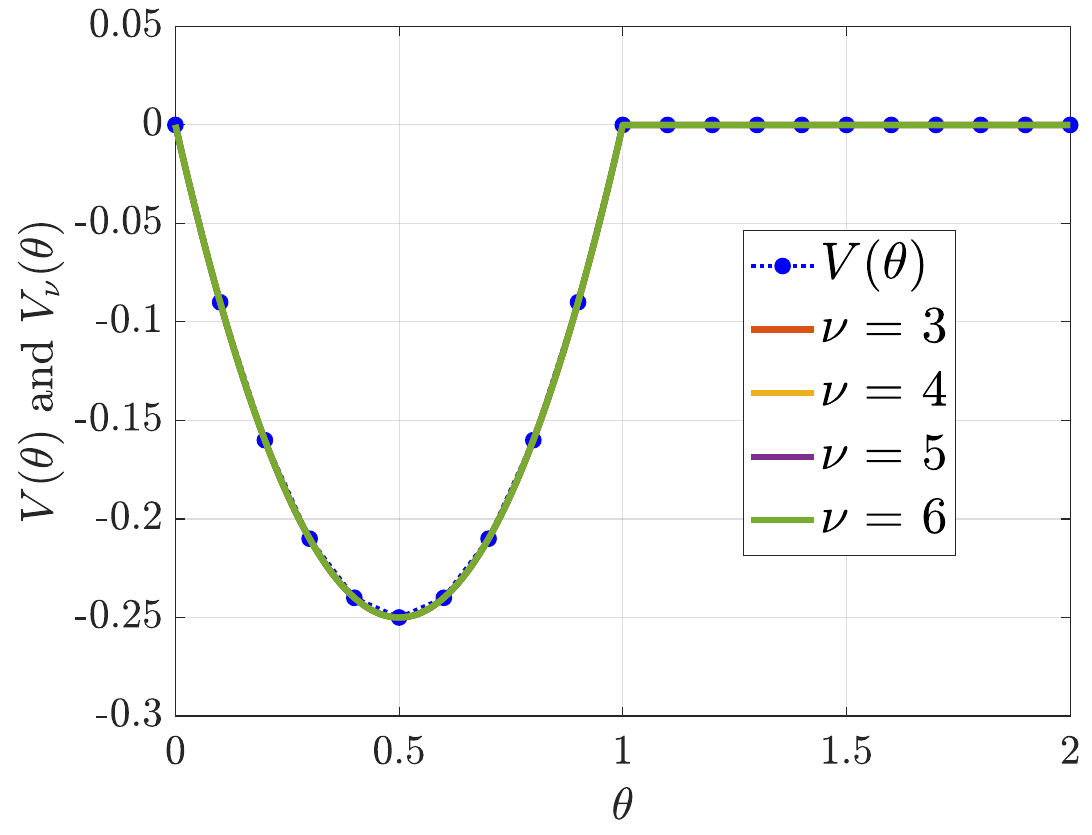}\\
            (b) $\Theta_1 = [0,1]$ and $\Theta_2 = [1,2]$
        \end{minipage}
    \end{tabular}
    \end{minipage}
    \caption{Verification and synthesis of a circular CBF $b(x,\theta) = \theta - \norm{x}^2$ for the clean Van der Pol oscillator~\eqref{eq:cleanvdpodynamics}. 
    \kscedit{
        Both figures plot SDP relaxation values $V(\theta)$ for verification and polynomial lower bounds $V_\nu(\theta)$'s for synthesis. When $V(\theta) \geq 0$, or $V_\nu(\theta) \geq 0$, it certifies $b(x,\theta)$ as a valid CBF. (a) shows the synthesis results when we calculate the lower bounds over $\Theta = [1, 2]$ directly. (b) shows that the lower bounds become much tighter when calculated in two subintervals $\Theta_1 = [0, 1]$ and $\Theta_2 = [1, 2]$ respectively.
    }
    \label{fig:vanderpol_clean}
    }
\end{figure}

{\bf Synthesis}.
We solve the SOS program~\eqref{eq:soslowerbound} at $\nu = 3,4,5,6$ to find polynomial lower bounds for $V(\theta)$. Appendix~\ref{sec:app:computemoments} describes how to compute the moments $\gamma_\beta$ in~\eqref{eq:computemoments}. Fig.~\ref{fig:vanderpol_clean}(a) solid lines plot the computed $V_\nu(\theta)$ for $\nu = 3,4,5,6$. The polynomial lower bounds approximate the unknown $V(\theta)$ very well (except near $\theta=1$ where $V(\theta)$ is nonsmooth). 

We then seek to maximize each $V_\nu(\theta)$ via Lasserre's hierarchy with $\kappa = \nu + 2$. At $\nu=3$, we obtain $\theta_\nu^\star = \{ 1.3997;1.8806 \}$ with $V_\nu^\star = \{1.2\times 10^{-8},3.2 \times 10^{-8} \}$. At $\nu=4$, we obtain $\theta_\nu^\star = \{ 1.3275 \}$ with $V_\nu^\star = \{-3.9\times 10^{-6} \}$. At $\nu=5$, we obtain $\theta_\nu^\star = \{ 1.2638, 1.6770, 1.9359 \}$ with $V_\nu^\star = \{-2.2\times 10^{-6}, -1.8\times 10^{-6}, 8.8 \times 10^{-6} \}$. At $\nu=6$, we obtain $\theta_\nu^\star = \{ 0,1.2280 \}$ with $V_\nu^\star = \{-7.8\times 10^{-6}, -7.4\times 10^{-8}\}$. These solutions are plotted as red stars in Fig.~\ref{fig:vanderpol_clean}(a).

{\bf Refined synthesis}. Since now we know the unknown $V(\theta)$ is nonsmooth at $\theta=1$, we can refine our synthesis by computing lower bound polynomials separately in $\Theta_1 = [0,1]$ and $\Theta_2 = [1,\theta_\max]$. Fig.~\ref{fig:vanderpol_clean}(b) shows the synthesized polynomial lower bounds $V_\nu (\theta)$ (we connect the curves in $\Theta_1$ and $\Theta_2$). Observe these lower bounds are perfectly tight!

Verification and synthesis of the one-dimensional $V(\theta)$ illustrates how our algorithm works. With these insights, we are ready to study the uncertain Van der Pol oscillator.

\subsection{Uncertain Van der Pol Oscillator}
\label{sec:uncertainvanderpol}
Consider system~\eqref{eq:cleanvdpodynamics} with uncertainty
\bea \label{eq:vdpodynamics}
\dx 
\!=\! \bmat{c}
x_2 \\
\frac{1}{2} (1-x_2^2) x_2  -x_1
\emat + \bmat{c}
0 \\
x_1 
\emat u + \bmat{c} 0 \\ 1 \emat \epsilon,
\eea 
with the same $\bbU$ as~\eqref{eq:cleanvdpou}. 

We can consider the same circular CBF candidate as in~\eqref{eq:cleanvdpocbf} and perform verification and synthesis. However, as we have seen in Section~\ref{sec:cleanvanderpol}, the maximum possible $\dot{b}(x,\theta)$ on the boundary $\partial \calC$ is zero, \ie $V^\star = 0$ for the synthesis problem~\eqref{eq:cbfsynthesis}. This is also true for the uncertain system.

\begin{proposition}\label{prop:wrongvdpocbf}
    $V(\theta) \leq 0, \forall \theta \geq 0$ for system~\eqref{eq:vdpodynamics} with the circular CBF candidate~\eqref{eq:cleanvdpocbf}.
\end{proposition}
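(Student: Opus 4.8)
The plan is to evaluate $V(\theta)$ by exhibiting a single point $x^\star$ on the boundary $\partial\calC$ where even the \emph{best} control cannot make $\dot{b}(x,\theta)$ nonnegative against the worst-case uncertainty, and then invoke the definition of $V(\theta)$ as the minimum over $\partial\calC$. For the circular candidate $b(x,\theta) = \theta - \norm{x}^2$, the boundary is the circle $\{x : x_1^2 + x_2^2 = \theta\}$, and $\frac{\partial b}{\partial x}(x,\theta) = [-2x_1, -2x_2]$. Using the expansion~\eqref{eq:dotbexpand} for the dynamics~\eqref{eq:vdpodynamics}, I would compute the three Lie derivatives explicitly: $L_f b(x,\theta) = -2x_1 x_2 - 2x_2\big(\tfrac{1}{2}(1-x_2^2)x_2 - x_1\big) = -x_2^2(1-x_2^2)\cdot 2 /2 \cdots$ (I'd carry out this routine simplification), $L_g b(x,\theta) = -2x_2 x_1$, and $L_J b(x,\theta) = -2x_2$.

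First I would specialize to the point $x^\star = [\sqrt{\theta}, 0]$ (equivalently any point on $\partial\calC$ with $x_2 = 0$). At such a point, $L_g b(x^\star,\theta) = 0$, so the control term vanishes identically regardless of the choice of $u \in \bbU$: the $\max_{u\in\bbU}$ is powerless. Likewise $L_J b(x^\star,\theta) = 0$, so the uncertainty term $V_\epsilon^\star = -M_\epsilon\norm{L_J b(x^\star,\theta)} = 0$. It then remains to check $L_f b(x^\star,\theta)$. Plugging $x_2 = 0$ into the expression for $L_f b$, every surviving term carries a factor of $x_2$, so $L_f b(x^\star,\theta) = 0$ as well. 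Hence $\dot{b}(x^\star,\theta) = 0$ for the optimal $u$ and worst-case $\epsilon$, giving $\max_{u\in\bbU}\min_{\norm{\epsilon}\le M_\epsilon}\dot{b}(x^\star,\theta) = 0$.

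Since $V(\theta) = \min_{x\in\partial\calC}\max_{u\in\bbU}\min_{\norm{\epsilon}\le M_\epsilon}\dot{b}(x,\theta)$ is a minimum over all boundary points, the value at the particular point $x^\star$ is an upper bound: $V(\theta) \le 0$. This holds for every $\theta \ge 0$ (for $\theta = 0$ the set $\calC$ is a single point and the claim is trivial, or one notes $x^\star = 0$ still works), which is exactly the claim. I would also remark that this argument mirrors the clean case in Section~\ref{sec:cleanvanderpol}, where the witness $x^\star = [\pm\sqrt{\theta};0]$ was found numerically to attain $V(\theta) = 0$ for $\theta \ge 1$; here uncertainty cannot help, since the uncertainty channel $J(x) = [0;1]$ is annihilated at $x_2 = 0$ exactly where the control channel $g(x) = [0;x_1]$ is also ineffective. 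The only mild subtlety — and the one step worth double-checking rather than an obstacle — is verifying that $L_f b$ truly vanishes at $x_2 = 0$; this is immediate from the fact that $f_1(x) = x_2$ and the $\partial b/\partial x_1$ component is $-2x_1$, so $L_f b$ has the form $-2x_1 x_2 + (\text{terms}\times x_2)$, manifestly zero when $x_2 = 0$.
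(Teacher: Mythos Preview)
Your proposal is correct and follows essentially the same approach as the paper: both arguments exhibit the witness point $x^\star = (\pm\sqrt{\theta},0) \in \partial\calC$, observe that $L_f b$, $L_g b$, and $L_J b$ all vanish there (so the inner max-min value of $\dot b$ is exactly $0$), and conclude $V(\theta) \le 0$ from the outer minimization. The paper simply writes out the closed-form expression $V(\theta) = \min_{\|x\|^2=\theta}\bigl[-x_2^2(1-x_2^2) + 2u_{\max}|x_1 x_2| - 2M_\epsilon|x_2|\bigr]$ before plugging in $x_2=0$, whereas you evaluate directly at the witness; this is a cosmetic difference.
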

\begin{proof}
    See Appendix~\ref{app:proofwrongvdpocbf}. \hfill \qedsymbol
\end{proof}

Can we find a robust CBF with strictly positive $V(\theta)$?

{\bf Elliptical robust CBF}.
Let us consider the following elliptical robust CBF candidate
\bea \label{eq:ellipsoidalcbf}
b(x,\theta)\! =\! 1\! -\! x\tran \underbrace{\bmat{cc}
\theta_1 & \theta_3 \\
\theta_3 & \theta_2 
\emat}_{:=A} x\!
\eea 
with parameter space
\bea \label{eq:ellipsoidalparam}
\Theta := \cbrace{\theta \in \Real{3} \mid \theta_1, \theta_2  \in [\thetalb,\thetaub]^2, \theta_3^2 \leq \xi^2 \theta_1 \theta_2},
\eea 
where $0 < \thetalb < \thetaub$ and $ 0 < \xi <1$. Clearly, $A \succ 0$ for all $(\theta_1,\theta_2,\theta_3) \in \Theta$ and $b(x,\theta)$ defines the boundary of an ellipse. Moreover, Assumption~\ref{assume:archimedeanness} holds and Appendix~\ref{app:bound:y:uncertainvdp} computes the bound for $y$ in POP~\eqref{eq:verifypop}. We will obtain $V(\theta) > 0 $ with this CBF candidate.

{\bf Synthesis}. Unlike Section~\ref{sec:cleanvanderpol} where it is easy to generate a few $\theta$ to evenly cover the parameter space $\Theta = [0,\theta_\max]$, the parameter space~\eqref{eq:ellipsoidalparam} would need a lot of samples to be covered. Therefore, we directly synthesize polynomial lower bounds $V_\nu(\theta)$ with $u_\max=5, M_\epsilon=0.1,\thetalb=0.25,\thetaub=0.75,\xi = 0.6$. (i) At $\nu = 3$, Fig.~\ref{fig:vanderpol_uncertain}(a) densely scatters samples of $\theta$ with $V_{\nu}(\theta)\geq 0$. The colormap shows that many samples (\eg the yellow points) have $V_{\nu}(\theta)$ being strictly positive. Particularly, the global maximum of $V_\nu(\theta)$ over $\Theta$ is attained at $\theta^\star_\nu = [0.2500;0.2757;0.1323]$ with value $V^\star_\nu = 0.2376$. (ii) Fig.~\ref{fig:vanderpol_uncertain}(b) shows the same scatter plot for $\nu = 4$. We see the set of elliptical robust CBFs that can be verified becomes much larger. The global maximum of $V_\nu(\theta)$ is attained at $\theta^\star_\nu = [0.3278;0.2500;0.1718]$ with value $V^\star_\nu = 1.0606$.

{\bf Smallest and largest ellipse}. What if we want to find the robust CBF that leads to the smallest ellipse? We can solve
\bea  
\max_{\theta \in \Theta} \{ \det{A} = \theta_1 \theta_2 - \theta_3^2  \mid V_\nu(\theta) \geq 0\}.
\eea 
Optimizing this for $V_3(\theta)$ via Lasserre's hierarchy at $\kappa=4$, we recover the best parameter $\theta_1=0.4714,\theta_2=0.5236,\theta_3 = 0.0893$. Similarly, $\theta_1=0.25,\theta_2=0.25,\theta_3 = 0.15$ minimizes $\det{A}$ and gives the largest ellipse.

\begin{figure}
    \begin{minipage}{\textwidth}
    \centering
    \begin{tabular}{cc}
        \begin{minipage}{0.49\textwidth}
            \centering
            \includegraphics[width=\columnwidth]{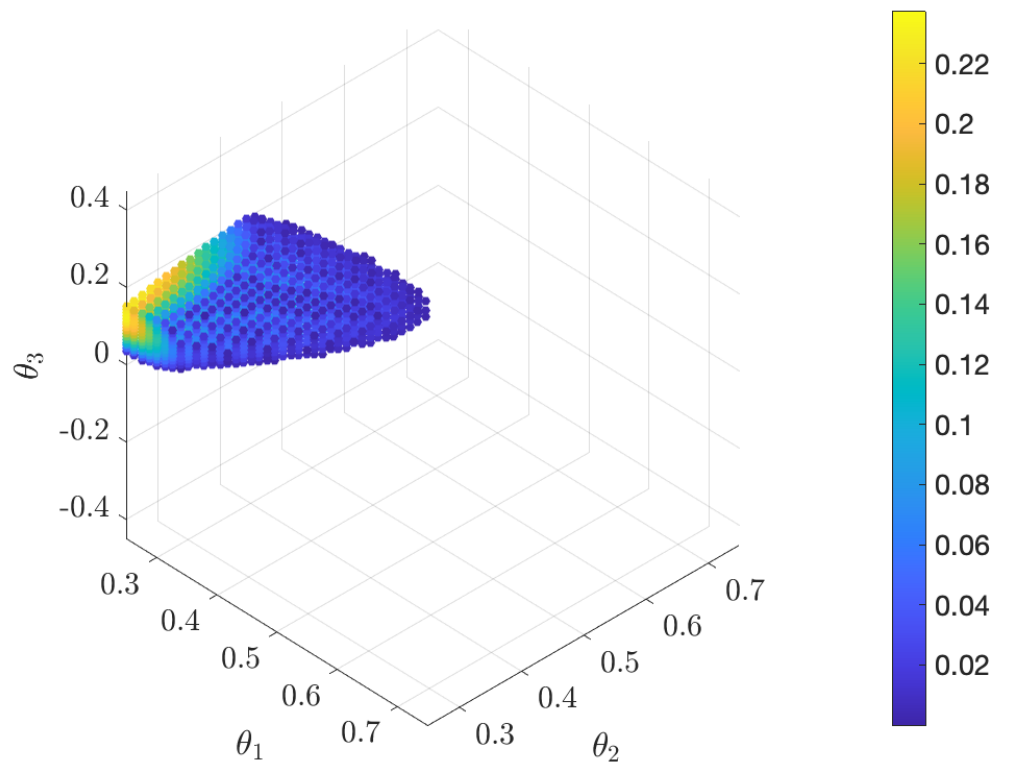}\\
            (a) $\nu = 3$
        \end{minipage}
        &
        \begin{minipage}{0.49\textwidth}
            \centering
            \includegraphics[width=\columnwidth]{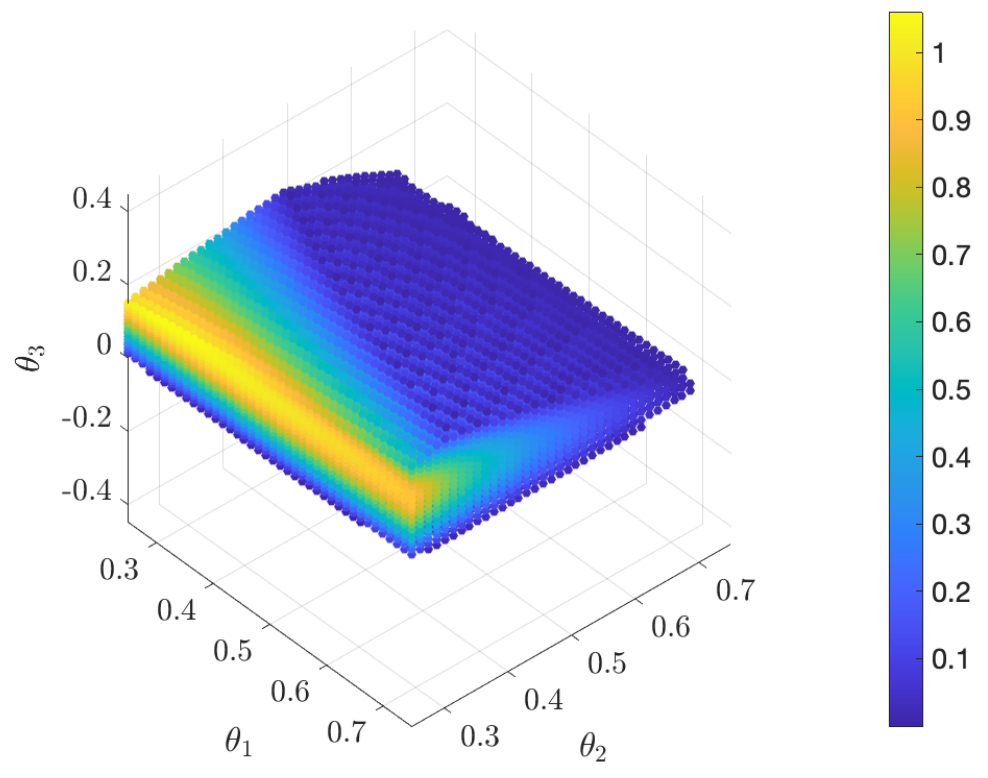}\\
            (b) $\nu = 4$
        \end{minipage}
    \end{tabular}
    \end{minipage}
    \caption{Synthesis of a robust ellipsoidal CBF $b(x,\theta) = 1 - x\tran A x$ for the uncertain Van der Pol oscillator~\eqref{eq:vdpodynamics}. 
    \kscedit{
        Both figures demonstrate a 3-D scatter plot of $V_\nu(\theta)$ against the 3-D parameter $\theta$. Each sampled $\theta$ with positive $V_\nu(\theta)$ is guaranteed to result in a valid robust CBF. For better visualization, we only scatter $\theta$ with $V_\nu(\theta) > 0$. (a) shows that a small fraction of $\theta$'s are certified to be valid with $\nu = 3$. (b) shows that the set of certifiable valid $\theta$'s becomes larger when we increase $\nu$ to $4$ (\ie using a tighter polynomial approximation).
    }
    \label{fig:vanderpol_uncertain}
    }
\end{figure}

\section{Conclusions}
We formulated robust CBF verification and synthesis as multilevel POPs. We used KKT conditions to eliminate inner problems in closed form, leading to a single-level POP formulation for verification and a min-max POP formulation for synthesis. We designed multilevel semidefinite relaxations to approximately solve both POPs with asymptotic global convergence guarantees. We validated our framework on a controlled Van der Pol oscillator. Future work will investigate extension of our method to verify multiple CBFs.

\section*{Acknowledgment}
The authors thank Jie Wang for providing an implementation that converts SOS programs into standard SDPs, and the anonymous reviewers for valuable comments and suggestions that help improve the quality of the paper.


\appendix
\HYedit{
\section{Proof of Proposition~\ref{prop:boundeddual}}
\label{app:proof:prop:boundeddual}
\begin{proof}
(i) Consider $\bbU$ a polytope with no degenerate extreme points. Observe that ``$\max_{u \in \bbU} L_g b(x,\theta) u$'' is a linear program in $u$ with fixed $(x,\theta)$. Because $\bbU$ is non-degenerate, there are $m' \leq m$ active linearly independent constraints at an optimal $u^\star$, and hence $m'$ nonzero $\zeta_i^\star$'s~\cite{bertsimas97book-lp}. Without loss of generality, let $i=1,\dots,m'$ be such active constraints. The KKT stationarity condition~\eqref{eq:kktstationary} reads: 
\bea
\underbrace{\bmat{ccc}
w_1 & \cdots & w_{m'}
\emat}_{:=W \in \Real{m \times m'}} \zeta^\star = \kscedit{L_g b(x,\theta)\tran},
\eea 
which implies $(\zeta^\star)\tran (W\tran W) (\zeta^\star) = \norm{L_g b(x,\theta)}^2$. As $W\tran W \succ 0$ ($\{w_i\}_{i=1}^{m'}$ are linearly independent), we have 
\bea 
\norm{\zeta^\star}^2 \leq \frac{\norm{L_g b(x,\theta)}^2}{\lambda_\min (W\tran W)}.
\eea 
Finally, $\norm{L_g b(x,\theta)}^2$ is bounded because it is smooth over the compact set $\partial \calC \times \Theta$. Thus, $\norm{\zeta^\star}^2$ is bounded.

(ii) Consider $\bbU$ a box with each dimension between $[-w_i,w_i]$. The KKT complementarity condition~\eqref{eq:kktcomp} says either $\zeta_i^\star = 0$ or $\zeta_i^\star \neq 0$ but $u^\star_i = \pm w_i$. In the second case, using the KKT stationarity condition~\eqref{eq:kktstationary}, we have
\bea 
\zeta_i^\star = \frac{[L_g b(x,\theta)]_i}{2u^\star_i} \Rightarrow (\zeta_i^\star)^2 = \frac{[L_g b(x,\theta)]_i^2}{4w_i^2}, i=1,\dots,m
\eea 
where $[L_g b(x,\theta)]_i$ denotes the $i$-th entry of $L_g b(x,\theta)$. Due to the boundedness of $[L_g b(x,\theta)]_i$, $\norm{\zeta^\star_i}^2$ is bounded.

(iii) Consider $\bbU$ an ellipsoid with a single constraint $u\tran W u \leq 1$. The KKT complementarity condition states either $\zeta^\star = 0$ or $\zeta^\star \neq 0$ but $(u^\star)\tran W (u^\star) = 1$. In the second case, the KKT stationarity condition~\eqref{eq:kktstationary} reads
\bea 
(\zeta^\star)^2 = \frac{\norm{L_g b(x,\theta)}^2 }{4 \norm{W u^\star}^2} \leq \frac{\norm{L_g b(x,\theta)}^2}{4 \lambda_\min (W)}.
\eea
The boundedness of $(\zeta^\star)^2$ follows from the boundedness of $\norm{L_g b(x,\theta)}^2$ over $ \partial \calC \times \Theta$.
\hfill \qedsymbol
\end{proof}}

\section{Bounded $y$ for POP~\eqref{eq:standardpop} with system~\eqref{eq:cleanvdpodynamics} and CBF~\eqref{eq:cleanvdpocbf}}
\label{app:bound:y:cleanvdp}
Clearly $\norm{x}^2 \leq \theta, u^2 \leq u_\max^2$ are bounded. 
It remains to show that $\zeta^\star$ is bounded. The KKT complementarity condition~\eqref{eq:kktcomp} states that either (i) $\zeta^\star = 0$, or (ii) when $\zeta^\star \neq 0$, $c_{u,i}(u) = 0$, which means $u^\star= \pm u_\max$. In the second case, the KKT stationarity condition~\eqref{eq:kktstationary} leads to
\bea 
2\zeta^\star u^\star = L_g b(x,\theta) = -2 x_1 x_2.
\eea 
Solving for $\zeta^\star$ results in
\bea
\zeta^\star = \frac{-x_1 x_2}{u^\star} \Rightarrow (\zeta^\star)^2 = \frac{x_1^2 x_2^2}{u_\max^2} \leq \frac{(x_1^2 + x_2^2)^2}{4 u_\max^2} = \frac{\theta^2}{4u_\max^2}.
\eea


\kscedit{
\section{Computing Moments}
\label{sec:app:computemoments}
{\bf Clean Van der Pol with circular CBF}. Consider the parameter space $\Theta = [\theta_\min, \theta_\max]$ and let $\Delta_{\theta} = \theta_\max - \theta_\min$ be its length. We compute the moments in \eqref{eq:computemoments} as
\bea 
\gamma_\beta\! =\! \int_{\theta_\min}^{\theta^\max}\!\! \theta^{\beta} d \psi(\theta) \!=\! \frac{1}{\Delta_\theta}  \int_{\theta_\min}^{\theta^\max} \!\! \theta^{\beta} d \theta \!=\! \frac{(\theta_\max^{\beta+1} - \theta_\min^{\beta+1})}{(\beta + 1)\Delta_\theta}.
\eea 

{\bf Uncertain Van der Pol with elliptical robust CBF}.
Consider $\Theta$ as in~\eqref{eq:ellipsoidalparam}, we compute \eqref{eq:computemoments}:
\bea 
& \gamma_\beta = \displaystyle \int_{\Theta} \theta_1^{\beta_1} \theta_2^{\beta_2} \theta_3^{\beta_3} d \psi(\theta)\nonumber \\
= & \displaystyle \int_{\theta_1} \int_{\theta_2} \int_{\theta_3}  \theta_1^{\beta_1} \theta_2^{\beta_2} \theta_3^{\beta_3} \parentheses{\frac{1}{\volume{\Theta}} d\theta_1 d\theta_2 d\theta_3} \nonumber\\
=& \displaystyle \frac{1}{\volume{\Theta}} \int_{\theta_1} \int_{\theta_2} \theta_1^{\beta_1} \theta_2^{\beta_2} d\theta_1 d\theta_2 \parentheses{\int_{ -\xi \sqrt{\theta_1 \theta_2}}^{\xi \sqrt{\theta_1 \theta_2}}  \theta_3^{\beta_3}  d\theta_3} \nonumber\\
= &\!\!\!\!\! \displaystyle \frac{\xi^{\beta_3+1} (1 - (-1)^{\beta_3+1}) }{(\beta_3+1)\volume{\Theta}} \int_{\theta_1} \int_{\theta_2} \theta_1^{\beta_1 + \frac{\beta_3+1}{2}} \theta_2^{\beta_2 + \frac{\beta_3+1}{2}} d\theta_1 d\theta_2 \nonumber\\
=&  \displaystyle \frac{\xi^{\beta_3+1} (1 - (-1)^{\beta_3+1}) }{(\beta_3+1)\volume{\Theta}} \frac{
    \thetaub^{\tldbeta_1} - \thetalb^{\tldbeta_1}
}{\tldbeta_1}
\frac{
    \thetaub^{\tldbeta_2} - \thetalb^{\tldbeta_2}
}{\tldbeta_2}
\eea 
where $\volume{\Theta}$ is the volume of $\Theta$ (a constant that we do not need to compute) and $\tldbeta_1 = \beta_1 + 1 + \frac{\beta_3+1}{2}$, $\tldbeta_2 = \beta_2 + 1 + \frac{\beta_3+1}{2}$.
}


\section{Proof of Proposition~\ref{prop:wrongvdpocbf}}
\label{app:proofwrongvdpocbf}
\begin{proof}
    We have $L_f b(x,\theta) = -x_2^2 (1-x_2^2)$, $L_g b(x,\theta) = -2x_1 x_2$, $L_J b(x,\theta) = -2 x_2$. 
    As a result:
    \begin{subequations}
        \begin{eqnarray}
            V_u^\star =& \displaystyle \max_{u^2 \leq u_\max^2} (-2x_1 x_2) u = 2 u_\max \abs{x_1 x_2}  \\
            V_\epsilon^\star =& \hspace{-4mm}\displaystyle \min_{\norm{\epsilon} \leq M_\epsilon} -2x_2 \epsilon\! =\! -2 M_\epsilon \abs{x_2},
        \end{eqnarray}
    \end{subequations}
    and 
    \bea 
    \hspace{-2mm}V(\theta) =  \displaystyle \min_{\norm{x}^2 = \theta} -x_2^2(1-x_2^2) + 2 u_\max \abs{x_1 x_2} - 2 M_\epsilon \abs{x_2}.
    \eea
    Choosing $x_2 = 0,x_1 = \pm \sqrt{\theta}$, we obtain $V(\theta) \leq 0$. \hfill \qedsymbol
\end{proof}

\section{Bounded $y$ for POP~\eqref{eq:standardpop} with system~\eqref{eq:vdpodynamics} and CBF~\eqref{eq:ellipsoidalcbf}}
\label{app:bound:y:uncertainvdp}
Clearly, $u^2 \leq u_\max^2$ is bounded.
$x$ is bounded because
\bea 
\max_{x\tran A x = 1} x\tran x \overset{v := A^{1/2}x}{=} \max_{v\tran v = 1} v\tran A\inv v = \frac{1}{\lambda_\min (A)},
\eea
where $\lambda_\min$ and $\lambda_\max$ indicates the minimum and maximum eigenvalue of $A$. Note that 
\bea 
\lambda_\min (\lambda_\max + \lambda_\min) \geq \lambda_\min \lambda_{\max} = \theta_1\theta_2 - \theta_3^2 \Longrightarrow \\
\lambda_\min \geq \frac{\theta_1 \theta_2 - \theta_3^2}{\lambda_\min + \lambda_\max} = \frac{\theta_1 \theta_2 - \theta_3^2}{\theta_1 + \theta_2}.
\eea 
Therefore, $x$ is bounded because
\bea \label{eq:uncertainvdpoxbound}
\norm{x}^2 \leq  \frac{1}{\lambda_\min (A)} \leq \frac{\theta_1 + \theta_2}{\theta_1 \theta_2 - \theta_3^2}.
\eea 
Now consider $z^2 = \norm{L_J b(x,\theta)}^2$:
\bea 
\norm{L_J b(x,\theta)}^2 = \norm{-2x\tran A J(x)}^2 = 4x\tran A J(x)J(x)\tran A x.
\eea 
Note that $J(x)J(x)\tran \preceq \eye$,
which means
\bea  \label{eq:uncertainvdpozbound}
\norm{L_J b(x,\theta)}^2 \leq 4x\tran A^2 x \leq 4\lambda_\max(A) \leq 4(\theta_1 + \theta_2).
\eea
It remains to show $\zeta^\star$ is bounded. Similar to \kscedit{Appendix}~\ref{app:bound:y:cleanvdp}, the KKT conditions tell us either $\zeta^\star = 0$ or 
\bea  
\zeta^\star = \frac{L_g b(x,\theta)}{2u^\star}, \quad (u^\star)^2 = u_\max^2.
\eea
Writing $(L_g b(x,\theta))^2$ as
\bea  
(L_g b(x,\theta))^2 = \norm{-2x\tran A g}^2 = 4x\tran A g g\tran A x
\eea 
with $g g\tran \preceq (x_1^2 + x_2^2) \eye$,
we obtain
\bea  
& \displaystyle (L_g b(x,\theta))^2 \leq 4 \norm{x}^2 x\tran A^2 x \leq 4\frac{(\theta_1 + \theta_2)^2}{\theta_1 \theta_2 - \theta_3^2}, \\
& \Rightarrow \displaystyle (\zeta^\star)^2 = \frac{(L_g b(x,\theta))^2}{4u_\max^2} \leq \frac{(\theta_1 + \theta_2)^2}{u_\max^2(\theta_1 \theta_2 - \theta_3^2)}.\label{eq:uncertainvdpozetabound}
\eea

\bibliographystyle{unsrt}
\bibliography{refs.bib}

\end{document}